\newtheorem{theorem}{Theorem}[section]
\newtheorem{definition}{Definition}[section]
\newtheorem{lemma}{Lemma}[section]
\begin{document}

\title{\huge Reweighted Quasi Norm Regularized Low-Rank Factorization for Matrix Robust PCA}
\author[]{Zhenzhi Qin\protect\footnotemark[1],\quad Liping Zhang\protect\footnotemark[1]}

\maketitle

\date{}

\renewcommand{\thefootnote}{\fnsymbol{footnote}}
\footnotetext[1]{Department of Mathematical Sciences, Tsinghua University, Beijing, China ({\tt qzz19@mails.tsinghua.edu.cn}; \quad {\tt lipingzhang@mail.tsinghua.edu.cn}).}
\renewcommand{\thefootnote}{\arabic{footnote}}

\vspace{-0.2in}

\begin{abstract} 

Robust Principal Component Analysis (RPCA) and its associated non-convex relaxation methods constitute a significant component of matrix completion problems, wherein matrix factorization strategies effectively reduce dimensionality and enhance computational speed. However, some non-convex factorization forms lack theoretical guarantees. This paper proposes a novel strategy in non-convex quasi-norm representation, introducing a method to obtain weighted matrix quasi-norm factorization forms. Especially, explicit bilinear factor matrix factorization formulations for the weighted logarithmic norm and weighted Schatten-$q$ quasi norms with $q=1, 1/2, 2/3$ are provided, along with the establishment of corresponding matrix completion models. An Alternating Direction Method of Multipliers (ADMM) framework algorithm is employed for solving, and convergence results of the algorithm are presented.

{\bf keywords}: Robust principal component analysis; Schatten-q quasi norm; Logarithmic quasi norm; bilinear factor matrix factorization
\end{abstract}

\section{Introduction}

In the realm of modern data analysis and signal processing, techniques for data decomposition and regularization play a pivotal role. These methodologies not only furnish indispensable tools for unraveling the underlying structure of data but also find wide-ranging applications across various domains, including recommendation systems \cite{gantner2010factorization}, image processing \cite{hsieh2014nuclear,aharon2006k}, medical image analysis \cite{selvan2020tensor}, intelligent transportation \cite{guo2022traffic}, and robust principal component analysis (RPCA) \cite{candes2011robust}. RPCA, also referred to as low-rank and sparse matrix decomposition in \cite{tao2011recovering}, \cite{zhou2011godec}, or robust matrix completion in \cite{chen2011robust}, aims to recover a low-rank matrix $X\in\mathbb{R}^{n\times m}$ and a sparse matrix $S\in\mathbb{R}^{n\times m}$ from corrupted observations $M=X^+S^\in\mathbb{R}^{n\times m}$ as follows:
\begin{equation}\label{eq:model-rank}
    \min_{X}\lambda{\rm rank}(X)+\|S\|_0,\quad {\rm s.t.}\ X+S-M=0,
\end{equation}
where $|\cdot|_0$ denotes the $\ell_0$-norm, and $\lambda>0$ represents a regularization parameter. However, solving \eqref{eq:model-rank} is NP-hard. Consequently, an effective approach to tackle this challenge is to relax \eqref{eq:model-rank} into a convex problem. To derive a more generalized model, \cite{chen2011robust} extends \eqref{eq:model-rank} into the following framework:
\begin{equation}\label{eq:RPCA-model}
    \min_{X}\lambda\|X\|_{*}+\|S\|_1,\ {\rm s.t.}\quad P_{\Omega}(X+S-M)=0.
\end{equation}
Here, $P_{\Omega}$ represents the projection onto the index set $\Omega$, and $M$ denotes the observed matrix. Wright et al. \cite{wright2009robust} and Candès et al. \cite{candes2011robust} have shown that under certain mild conditions, the convex relaxation formulation \eqref{eq:RPCA-model} can effectively recover the low-rank and sparse matrices $X^*$ and $S^*$ with high probability.

In pursuit of improved recovery efficacy, researchers have explored various non-convex representation techniques to bring the model closer to low-rank solutions. Apart from the classical nuclear norm relaxation, numerous recent studies have put forth effective and computationally tractable non-convex relaxation methods\cite{lai2013improved,lu2014smoothed,nie2012robust,shang2017bilinear,hu2012fast,chen2021logarithmic,gu2017weighted,recht2010guaranteed,kang2016top}. \cite{nie2012robust} proposes a robust matrix completing model based on Schatten-q norm ($q<1$), where Schatten-q norm is defined as follows:
\begin{equation*}
	\|X\|_{S_q}=\left(\sum_{i=1}^{\min\{ m,n\}}\sigma_{i}^q\right)^{1/q},
\end{equation*}
where $\sigma_i$ denotes the $i$-th singular value of the matrix $X$. In comparison with the nuclear norm, the Schatten-$q$ quasi-norm with $0 < q < 1$ is closer to approximating the rank function \cite{lai2013improved,lu2014smoothed}. The logarithmic determinant ($\log\det$) function yields a smaller value than the nuclear norm, thus employing it to approximate the rank function results in enhanced low-rank performance \cite{kang2016top}. Essentially, the logarithmic determinant involves applying the logarithm function to each singular value and summing them up:
\begin{equation*}
    \|X\|_L^p=\sum_{i=1}^{\min\{ m,n\}}\log(\sigma_i^p+\epsilon).
\end{equation*}
Kang et al. \cite{kang2016top} leverage this property and devise an Alternating Direction Method of Multipliers (ADMM) algorithm to address recovery problems. Re-weighting strategies have been widely employed in matrix contexts \cite{gu2017weighted}, this strategy enhances the effectiveness of the shrinkage operator.

However, the aforementioned models involve non-convex relaxation methods that operate directly on singular value vectors, necessitating matrix singular value decomposition, which incurs significant computational costs. Despite numerous attempts at fast SVD computation, such as partial SVD \cite{larsen2005propack}, the performance of these methods still falls short for many practical applications \cite{oh2015fast}. It has been observed that representing relaxation functions in the form of matrix factorizations can effectively reduce the dimensionality of data and offer computational advantages \cite{shang2017bilinear,chen2021logarithmic,recht2010guaranteed}. \cite{recht2010guaranteed} propose a factorization form of the nuclear norm, termed the BM equation:
\begin{equation}\label{eq:BM}
		\|X\|_{*}=\min_{A\in\mathbb{R}^{m\times d},B\in\mathbb{R}^{n\times d},X=AB^T}\frac{1}{2}\left(\|A\|_{F}^2+\|B\|_{F}^2\right).
\end{equation}
Replacing the nuclear norm in \eqref{eq:RPCA-model}, \cite{cabral2013unifying} propose a corresponding model and employ an ADMM method to solve it. Furthermore, Shang et al. \cite{shang2017bilinear} extend the concept of matrix factorization to Schatten-$q$ norms, offering matrix factorization forms for $q=0.5$ and $q=2/3$. They propose corresponding factorization models based on this theory and validate the recovery performance. Additionally, Chen et al. \cite{chen2021logarithmic} present a factorization form of the logarithmic determinant, significantly enhancing computational efficiency.

Chen et al. \cite{chen2022reweighted} introduced a reweighted nuclear norm factorization form. However, in the factorization form proposed by \cite{chen2022reweighted}, the weights are not freely selectable; rather, they must be determined based on the singular values of the raw matrix, which is also referred to as the ``deep prior." In practical applications of matrix recovery, fulfilling the ``deep prior" condition is nearly impossible. Hence, exploring a weighted matrix factorization form where the weights $W$ are independent of the matrix to be recovered represents an area for further research.

Moreover, integrating the re-weighted strategy with non-convex representations such as Schatten-q norms or Logarithmic norms currently lacks theoretical models and effective algorithms. Consequently, designing the factorization form of re-weighted quasi-norms emerges as a research topic worthy of exploration, which not only extends the theory of matrix recovery problems but also holds promise for diverse application prospects.

We summarize the main contributions of this work as follows:
\begin{itemize}
    \item We investigate the relationship between the weight matrix and the singular value vector, and provide an analytical strategy for matrix-weighted quasi norms.
    \item For Schatten-q norms ($q=1, 0.5$, and $2/3$) and Logarithmic norms, we demonstrate the existence of their factorization forms and provide specific formulations.
    \item We establish the matrix completion models for Schatten-q norms ($q=1, 0.5$, and $2/3$) and Logarithmic norms, utilizing an ADMM framework algorithm for their solution. Additionally, we provide convergence results for this algorithm.
\end{itemize}

The remaining structure of this article is as follows. Section \ref{sec:re-1} presents the relevant knowledge of manifold optimization and matrix theory required for the study. Section \ref{sec:re-2} delves into the action of the weight matrix on the original singular value matrix in the strategy of weighted quasi-nuclear norms and establishes relevant inequalities, extending the discussion to complex matrices and quaternion matrices. Section \ref{sec:re-3} investigates the factorization forms of weighted Schatten-q norms with $q=1, 0.5, 2/3$, and the Logarithmic norm, providing the corresponding factorization formulations. Section \ref{sec:re-4} establishes the corresponding low-rank matrix recovery models and solution methods for $q=1$, $q=0.5$, and $q=2/3$ in Schatten-q norm and Logarithmic norm. It offers detailed analytical solutions to each subproblem and provides convergence analysis to the Karush-Kuhn-Tucker (KKT) point of the problem under mild conditions.

\section{Preliminaries}\label{sec:re-1}

For proposing the factorization strategy for weighted quasi-norms, we rely on certain results from manifold optimization and matrix analysis. Initially, we revisit some foundational concepts of manifold optimization, as outlined in \cite{boumal2023introduction}.

\begin{definition}\label{def:embedmanifold}
	Let $\mathcal{M}$ be a subset of a linear space $\mathcal{E}$. We say $\mathcal{M}$ is an embedded submanifold if either of the following holds:
	\begin{itemize}
		\item[1.] $\mathcal{M}$ is an open subset of $\mathcal{E}$, then, we call $\mathcal{M}$ an open submanifold. If $\mathcal{M}=\mathcal{E}$, we also call it a linear manifold.
  
		\item[2.]  For a fixed integer $k\geq1$ and for each $x\in\mathcal{M}$ there exists  a neighborhood $U$ of $x$ in $\mathcal{E}$ and a smooth function $h:\ U\rightarrow\mathbb{R}^k$ such that
		\begin{itemize}
			\item [\rm (a)] if $y\in U$,  then $y\in\mathcal{M}$ if and only if $h(y)=0$;
			\item [\rm (b)] ${\rm rank}\ {\rm D}h(x)=k$, where ${\rm D}h$ represents the derivative of $h$.
		\end{itemize}
	\end{itemize}
\end{definition}
Such a function $h$ in Definition \ref{def:embedmanifold} is called a local defining function for $\mathcal{M}$ at $x$. This characterization implies that the manifold is locally homeomorphic to a $k$-dimensional Euclidean space. In our study, we primarily concentrate on the Stiefel manifold.
\begin{definition}
	The set of all column-orthogonal matrices of the same size is referred to as the Stiefel manifold, denoted as
	\begin{equation*}
		{\rm St}(n,p)=\{X\in\mathbb{R}^{n\times p}:\ X^TX=I_p\}.
	\end{equation*}
	Its corresponding local defining function is $h(X)=X^TX-I_p$.
\end{definition}
Clearly, ${\rm St}(n,p)$ is an embedded submanifold owing to the existence of the local defining function $h$. Tangent spaces can be utilized to elucidate the local properties of the manifold.
\begin{definition}
	Let $\mathcal{M}$ be a subset of a linear space $\mathcal{E}$. For all $x\in\mathcal{M}$, define 
	\begin{equation*}
		T_x\mathcal{M}=\{c'(0):\ c:\ [0,1]\rightarrow\mathcal{M}\text{ is smooth around }0\text{ and }c(0)=x\}.
	\end{equation*}
        We call $T_x\mathcal{M}$ the tangent space to $\mathcal{M}$ at $x$. Vectors in $T_x\mathcal{M}$ are called tangent vectors to $\mathcal{M}$ at $x$.
\end{definition}
The definition of the tangent space above may not be clear and intuitive. \cite{boumal2023introduction} provides a clear expression for $T_X{\rm St}(n,p)$ as follows:
\begin{equation}\label{eq:manifold-st-tanget}
	T_X{\rm St}(n,p)=\{V\in\mathbb{R}^{n\times p}:\ X^TV+V^TX=0\}.
\end{equation}

\begin{definition}[Riemann manifold]
	An inner product on $T_x\mathcal{M}$ is a bilinear, symmetric, and positive definite function $\langle\cdot,\cdot\rangle_x:\ T_x\mathcal{M}\times T_x\mathcal{M}\rightarrow\mathbb{R}$. Such an inner product $\langle\cdot,\cdot\rangle_x$ is called a smooth variation on $\mathcal{M}$ with $x$ if the function $x\rightarrow\langle V(x),W(x)\rangle_x$ is  smooth from $\mathcal{M}$ to $\mathbb{R}$ for any two smooth vector fields $V, W$ on $\mathcal{M}$. A manifold with smooth variations is called a Riemannian manifold.
\end{definition}
As described in \cite[Chapter 7]{boumal2023introduction}, ${\rm St}(n,p)$ is a Riemannian manifold. Optimal conditions for nonsmooth optimization on Riemannian manifolds were discussed in \cite{Zhang2013Optimality}. Firstly, we recall some definitions of the subdifferential in Riemannian manifolds.
\begin{definition}[Chart \cite{boumal2023introduction}]
	Let $\mathcal{M}$ be a manifold. If $U\subset\mathcal{M}$ and a smooth function $\phi:\ U\rightarrow\mathbb{R}^d$ satistying:
    \begin{itemize}
        \item $\phi(U)$ is an open set in $\mathbb{R}^d$,
        \item $\phi$ is invertible between $U$ and $\phi(U)$,
    \end{itemize}
    then we call $(U,\phi)$ is a chart of $\mathcal{M}$.
\end{definition}
According to \cite[Chapter 8]{boumal2023introduction}, we know that every point $x$ in the embedded manifold $\mathcal{M}$ possesses a chart $(U,\phi)$ such that $x\in U$. The concept of Lipschitz continuity on a manifold is defined as follows, as per \cite[Lemma 3.1]{Zhang2013Optimality}:
\begin{definition}[Lipschitz Continuity on Manifolds \cite{Zhang2013Optimality}]
	If for any $x \in \mathcal{M}\subset\mathbb{R}^m$, function $F:\mathbb{R}^m\rightarrow\mathbb{R}$ satisfies that $F\circ\phi^{-1}$ is Lipschitz continuous in the $\phi(V)$, where $(V,\phi)$ is a chart of $\mathcal{M}$ which contains $x$, then the function $F$ is locally Lipschitz continuous on the manifold $\mathcal{M}$.
\end{definition}
Similar to optimization analysis in Euclidean spaces, with Lipschitz continuity, we can introduce generalized Clarke subdifferentials.
\begin{definition}[Generalized Clarke Subdifferential \cite{Hpsseini2011Generalized}]
	For a locally Lipschitz function $F$ on $\mathcal{M}$, the Riemannian generalized directional derivative at point $x\in\mathcal{M}$ in direction $v$ is defined as
	\begin{equation*}
		F^{\circ}(x, v ) = \mathop{\lim\sup}_{y \rightarrow x,t\downarrow 0}\frac{F \circ \phi^{-1}(\phi (y ) + tD\phi (x)[v ]) - F \circ \phi^{-1}(\phi (y ))}{t},
	\end{equation*}
	where $(U,\phi)$ is the chart at $x$ of manifold $\mathcal{M}$. The generalized Clarke subdifferential of $F$ at $x\in \mathcal{M}$ is denoted as $\partial F(x)$ and is defined as
	\begin{equation*}
		\partial F(x) = \{ \xi \in T_x\mathcal{M} : \langle \xi , v \rangle \leq F^{\circ}(x, v ),\ \forall v \in T_x\mathcal{M} \} .
	\end{equation*}
\end{definition}
\begin{definition}[Definition 5.2 \cite{Zhang2013Optimality}]
	For a locally Lipschitz function $F$, if 
	\begin{itemize}
		\item the limit $F'(x; v )\doteq\lim_{t\downarrow 0}\frac{F(x+tv ) - F(x)}{t}$ exists for all $v \in T_x\mathcal{M}$, and
		\item $F'(x; v ) = F^{\circ }(x; v )$ for all $v \in T_x\mathcal{M}$,
	\end{itemize}
	then $F$ is regular along $T_x\mathcal{M}$ at $x \in \mathcal{M}$.
\end{definition}

In fact, when the function $F$ is regular along $T_x\mathcal{M}$ at $x \in \mathcal{M}$, the generalized Clarke subdifferential of $F$ on $\mathcal{M}$ can be expressed concisely as:
\begin{lemma}[Theorem 5.1 (iii) \cite{Zhang2013Optimality}]\label{lemma: manifold1}
	Let $\mathcal{M}$ be an embedded submanifold of $\mathbb{R}^{m}$. $F$ is Lipschitz continuous at $x\in\mathcal{M}$, and $\bar{F}\doteq F|_{\mathcal{M}}$. If $F$ is regular along $T_x\mathcal{M}$ at $x \in \mathcal{M}$, then we have $\partial\bar{F}(x)={\rm Proj}_{T_x\mathcal{M}}(\partial F(x))$, where ${\rm Proj}$ is the projection operator.
\end{lemma}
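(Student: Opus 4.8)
The plan is to reduce this set identity to a single statement about generalized directional derivatives and then to prove that statement using the chart description of $\bar F^\circ$ together with the regularity hypothesis. Observe first that both $\partial\bar F(x)$ and ${\rm Proj}_{T_x\mathcal{M}}(\partial F(x))$ are nonempty compact convex subsets of the inner product space $T_x\mathcal{M}$ (the first by the definition of the Riemannian subdifferential, the second as the image of the compact convex Euclidean subdifferential under a linear map), so it suffices to show that their support functions coincide on $T_x\mathcal{M}$. By Clarke's theory in $\mathbb{R}^m$ the support function of $\partial F(x)$ is $v\mapsto F^{\circ}(x;v)$; since ${\rm Proj}_{T_x\mathcal{M}}$ is self-adjoint and fixes tangent vectors, for $v\in T_x\mathcal{M}$ we get
\[
\sup_{g\in\partial F(x)}\langle{\rm Proj}_{T_x\mathcal{M}}g,v\rangle=\sup_{g\in\partial F(x)}\langle g,{\rm Proj}_{T_x\mathcal{M}}v\rangle=\sup_{g\in\partial F(x)}\langle g,v\rangle=F^{\circ}(x;v).
\]
On the other hand, the defining inequality exhibits $\partial\bar F(x)$ as the set whose support function is the sublinear map $\bar F^{\circ}(x;\cdot)$. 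Hence the entire lemma collapses to the pointwise identity
\[
\bar F^{\circ}(x;v)=F^{\circ}(x;v)\qquad\text{for all }v\in T_x\mathcal{M}.\tag{$\ast$}
\]

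To analyze $\bar F^{\circ}$ I would pass to a chart $(U,\phi)$ at $x$ and set $G=F\circ\phi^{-1}$, which is locally Lipschitz on $\phi(U)$ because $F$ is Lipschitz near $x$ and $\phi^{-1}$ is smooth. Writing $z=\phi(y)$ and $u=D\phi(x)[v]$, the defining limsup rearranges into $\bar F^{\circ}(x;v)=G^{\circ}(\phi(x);u)$, the ordinary Clarke derivative of $G$ at $\phi(x)$. For the inequality $\bar F^{\circ}(x;v)\le F^{\circ}(x;v)$ I would use the second-order expansion of the smooth map $\phi^{-1}$: on a compact neighborhood of $\phi(x)$ one has $\phi^{-1}(\phi(y)+tu)=y+t\,w_y+r(y,t)$ with $w_y=D\phi^{-1}(\phi(y))[u]\to v$ as $y\to x$ and $|r(y,t)|\le C t^{2}$ uniformly. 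Combining this with the Lipschitz bound $L$ of $F$ gives
\[
\frac{G(z+tu)-G(z)}{t}\le\frac{F(y+tv)-F(y)}{t}+L\,|w_y-v|+LCt,
\]
and taking the $\limsup$ over $y\to x,\ t\downarrow0$ yields $\bar F^{\circ}(x;v)\le F^{\circ}(x;v)$; note this half uses only local Lipschitzness, not regularity.

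The reverse inequality is where the regularity hypothesis enters. For $v\in T_x\mathcal{M}$ the curve $\gamma(t)=\phi^{-1}(\phi(x)+tu)$ lies in $\mathcal{M}$ with $\gamma(0)=x$ and $\dot\gamma(0)=v$, so $\gamma(t)=x+tv+O(t^{2})$; the Lipschitz estimate then gives
\[
G'(\phi(x);u)=\lim_{t\downarrow0}\frac{F(\gamma(t))-F(x)}{t}=\lim_{t\downarrow0}\frac{F(x+tv)-F(x)}{t}=F'(x;v),
\]
which exists by regularity and equals $F^{\circ}(x;v)$. Since a one-sided limit is always dominated by the corresponding limsup, $\bar F^{\circ}(x;v)=G^{\circ}(\phi(x);u)\ge G'(\phi(x);u)=F^{\circ}(x;v)$, establishing $(\ast)$ and hence the lemma.

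I expect the main obstacle to be the uniform control required for the $\le$ direction: one must guarantee that the Taylor remainder $r(y,t)$ is $O(t^{2})$ and that $w_y\to v$ hold uniformly as $y$ ranges over the neighborhood of $x$ inside the limsup, which rests on smoothness of $\phi^{-1}$ and boundedness of its first and second derivatives on a compact set. The remaining ingredients — sublinearity and upper semicontinuity of the Clarke derivatives, the support-function characterization of $\partial F(x)$, and self-adjointness of the orthogonal projection — are standard and should go through routinely.
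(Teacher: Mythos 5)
The paper offers no proof of this lemma at all --- it is imported verbatim from \cite{Zhang2013Optimality} (Theorem 5.1(iii)) --- so there is no in-paper argument to compare yours against; the relevant benchmark is the proof in that reference. Your proof is correct and follows essentially that standard route: reduce the set identity to the pointwise identity $\bar F^{\circ}(x;v)=F^{\circ}(x;v)$ for $v\in T_x\mathcal{M}$ via support functions of compact convex sets (using self-adjointness of ${\rm Proj}_{T_x\mathcal{M}}$), prove ``$\leq$'' by the chart/Taylor/Lipschitz estimate (which, as you note, needs no regularity), and obtain ``$\geq$'' from regularity by evaluating $F$ along the manifold curve $\gamma(t)=\phi^{-1}(\phi(x)+tu)$ with $\dot\gamma(0)=v$. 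The two items you dismiss as routine are indeed the only things left to check, and both go through: finiteness and sublinearity of $\bar F^{\circ}(x;\cdot)=G^{\circ}(\phi(x);D\phi(x)[\cdot])$ on $T_x\mathcal{M}$, so that $\partial\bar F(x)$ really is the nonempty compact convex set whose support function is $\bar F^{\circ}(x;\cdot)$; and the reading of $F^{\circ}(x;v)$ in the regularity definition as the \emph{Euclidean} Clarke derivative of $F$ on $\mathbb{R}^m$, which is the convention of the cited source and is what makes the chain $\bar F^{\circ}(x;v)\geq G'(\phi(x);u)=F'(x;v)=F^{\circ}(x;v)$ legitimate.
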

\begin{lemma}[Lemma 5.1  \cite{Zhang2013Optimality}]\label{lemma: manifold2}
	Let $F = F_1 + F_2$ be a function on $\mathbb{R}^{m}$, where $F_1$ is convex and $F_2$ is continuously differentiable. Then $F$ is regular along $T_x\mathcal{M}$ for any $x\in \mathcal{M}$.
\end{lemma}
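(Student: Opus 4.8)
The plan is to verify the two defining conditions of regularity along $T_x\mathcal{M}$ by transporting everything through a chart $(U,\phi)$ at $x$ and reducing to the corresponding \emph{Euclidean} statement for $\tilde{F}\doteq F\circ\phi^{-1}$ on the open set $\phi(U)\subset\mathbb{R}^d$. Writing $z=\phi(x)$ and $w=\mathrm{D}\phi(x)[v]$ for $v\in T_x\mathcal{M}$, the manifold generalized directional derivative $F^{\circ}(x,v)$ is, by its very definition, a $\limsup$ of difference quotients of $\tilde F$ based at points $\phi(y)$ near $z$ in the direction $w$. Thus the whole claim reduces to showing that the one-sided derivative $F'(x;v)$ exists and coincides with this $\limsup$. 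I would treat existence (first bullet of the regularity definition) and the equality $F'(x;v)=F^{\circ}(x,v)$ (second bullet) separately.

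For existence I would use that $F_1$, being finite and convex on $\mathbb{R}^m$, admits one-sided directional derivatives in every direction, while $F_2\in C^1$ contributes the linear term $\langle\nabla F_2(x),v\rangle$; hence $F'(x;v)=F_1'(x;v)+\langle\nabla F_2(x),v\rangle$ exists for all $v$. The substantive part is the equality, and the key input is \emph{ambient} Clarke regularity of $F$ on $\mathbb{R}^m$: convex functions are regular, continuously differentiable functions are regular, and regularity is preserved under sums (standard facts in nonsmooth analysis, cf.\ \cite{Zhang2013Optimality,Hpsseini2011Generalized}). Consequently the ordinary Euclidean Clarke directional derivative of $F$ at $x$ in direction $v$ already equals $F'(x;v)$.

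It then remains to link the chart-based quantity $F^{\circ}(x,v)$ to these ambient quantities. I would establish the two-sided squeeze
\begin{equation*}
F'(x;v)\ \le\ F^{\circ}(x,v)\ \le\ F^{\circ}_{\mathbb{R}^m}(x;v)\ =\ F'(x;v),
\end{equation*}
where $F^{\circ}_{\mathbb{R}^m}$ denotes the usual Euclidean Clarke derivative. For the left inequality I would restrict the defining $\limsup$ to the fixed base point $y=x$ and use the first-order expansion $\phi^{-1}(z+sw)=x+sv+o(s)$ together with local Lipschitzness of $F$ to recover the genuine one-sided quotient of $F$ along $v$. For the middle inequality I would set $x'=\phi^{-1}(z')$, observe that as $z'\to z$ the base points $x'$ range over $U\subset\mathcal{M}$ (a subset of a full ambient neighborhood of $x$), apply $\phi^{-1}(z'+sw)=x'+s\,\mathrm{D}\phi^{-1}(z')[w]+o(s)$ with $\mathrm{D}\phi^{-1}(z')[w]\to v$, and absorb the resulting error through the Lipschitz constant of $F$; this bounds $F^{\circ}(x,v)$ by the Euclidean $\limsup$ taken over \emph{all} ambient base points, namely $F^{\circ}_{\mathbb{R}^m}(x;v)$. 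The final equality is ambient regularity, and the squeeze forces $F^{\circ}(x,v)=F'(x;v)$, which is exactly the second condition.

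The main obstacle — and the reason the argument cannot simply be run inside the chart — is that convexity is \emph{not} preserved by the coordinate map $\phi^{-1}$, so $F_1\circ\phi^{-1}$ need not be convex and the implication ``convex $\Rightarrow$ regular'' is unavailable in chart coordinates. The remedy is to prove regularity in the ambient space $\mathbb{R}^m$, where convexity is intact, and then transfer it; the transfer is delicate precisely because the chart-based $\limsup$ moves base points only along the manifold, whereas the Euclidean Clarke derivative moves them through all of $\mathbb{R}^m$, so the change of variables yields only an inequality and one must invoke ambient regularity to close the gap. Finally I would note that the resulting value $F'(x;v)$ is independent of the chart, so regularity along $T_x\mathcal{M}$ does not depend on the chosen $(U,\phi)$, and since $x\in\mathcal{M}$ was arbitrary the conclusion holds at every point.
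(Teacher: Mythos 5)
You should note first that the paper itself contains no proof of this statement: it is quoted, with attribution, as Lemma 5.1 of \cite{Zhang2013Optimality}, so the only ``in-paper approach'' is a citation, and your argument has to be judged on its own merits. On those merits it is correct, and it is essentially the canonical argument (the one the cited reference also relies on): (i) existence of $F'(x;v)$ from one-sided directional differentiability of finite convex functions plus the $C^1$ term $\langle\nabla F_2(x),v\rangle$; (ii) ambient Clarke regularity of $F=F_1+F_2$ (convex functions and $C^1$ functions are Clarke regular, and regularity is preserved under finite sums), which gives $F^{\circ}_{\mathbb{R}^m}(x;v)=F'(x;v)$; (iii) the squeeze $F'(x;v)\le F^{\circ}(x,v)\le F^{\circ}_{\mathbb{R}^m}(x;v)$, whose left half comes from freezing the base point $y=x$ in the chart-based $\limsup$ and whose right half comes from viewing the chart base points as a particular family of ambient base points. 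The one step you should tighten in a final write-up is the right-hand inequality: you need the Taylor remainder in $\phi^{-1}(z'+sw)=\phi^{-1}(z')+s\,\mathrm{D}\phi^{-1}(z')[w]+o(s)$ to be \emph{uniform} in the base point $z'$ near $\phi(x)$, since in the $\limsup$ both the base point and the step size vary simultaneously; this uniformity does hold because $\phi^{-1}$ is smooth, so $\mathrm{D}\phi^{-1}$ is locally uniformly continuous, but it must be said, otherwise the error you absorb ``through the Lipschitz constant of $F$'' is not controlled along a joint sequence $(z_k,t_k)$. Your closing observations --- that convexity is destroyed by the chart map, so regularity must be established ambiently and then transferred, and that the common value $F'(x;v)$ is chart-independent --- identify exactly why the proof has to be organized this way.
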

For unconstrained optimization problems on manifolds $\mathcal{M}$, we can derive first-order necessary conditions based on \cite[Theorem 4.1]{Zhang2013Optimality}.
\begin{lemma}[Theorem 4.1 \cite{Zhang2013Optimality}]\label{lemma: manifold3}
	If $x^{*}$ is a local minimum solution to the problem $\min_{x\in\mathcal{M}} F(x)$, then $0\in \partial F(x^{*})$.
\end{lemma}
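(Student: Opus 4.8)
The conclusion $0\in\partial F(x^{*})$ unwinds, by the definition of the generalized Clarke subdifferential, to the requirement that $\langle 0,v\rangle = 0 \le F^{\circ}(x^{*},v)$ for every $v\in T_{x^{*}}\mathcal{M}$. Thus the entire statement reduces to showing that the Riemannian generalized directional derivative is nonnegative in every tangent direction, i.e. $F^{\circ}(x^{*},v)\ge 0$ for all $v\in T_{x^{*}}\mathcal{M}$. The plan is to transport the problem to the chart domain, where it becomes an elementary one-sided monotonicity argument.

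First I would fix a chart $(U,\phi)$ at $x^{*}$ with $\phi(U)$ open in $\mathbb{R}^{d}$, and abbreviate $g\doteq F\circ\phi^{-1}$ and $u^{*}\doteq\phi(x^{*})$. By the definition of local Lipschitz continuity on $\mathcal{M}$, $g$ is locally Lipschitz on $\phi(U)$, so all the quantities below are finite and well defined. Fix $v\in T_{x^{*}}\mathcal{M}$ and put $w\doteq D\phi(x^{*})[v]$. In the defining limsup for $F^{\circ}(x^{*},v)$ I would restrict the approach to the constant sequence $y\equiv x^{*}$ (so $\phi(y)=u^{*}$); since a limsup over a larger index set dominates the limsup over any sub-collection of approaches, this yields
\begin{equation*}
	F^{\circ}(x^{*},v)\;\ge\;\limsup_{t\downarrow 0}\frac{g(u^{*}+tw)-g(u^{*})}{t}.
\end{equation*}

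Next I would interpret the right-hand side on the manifold. For all sufficiently small $t>0$ the point $u^{*}+tw$ lies in the open set $\phi(U)$, so $c(t)\doteq\phi^{-1}(u^{*}+tw)$ is a well-defined smooth curve in $\mathcal{M}$ with $c(0)=x^{*}$ and $c'(0)=D\phi^{-1}(u^{*})[w]=[D\phi(x^{*})]^{-1}[w]=v$; in particular $g(u^{*}+tw)=F(c(t))$ and $g(u^{*})=F(x^{*})$. Because $x^{*}$ is a local minimizer of $F$ on $\mathcal{M}$ and $c(t)\to x^{*}$ as $t\downarrow 0$ with $c(t)\in\mathcal{M}$, we have $F(c(t))\ge F(x^{*})$ for all small $t>0$, whence each difference quotient $\big(g(u^{*}+tw)-g(u^{*})\big)/t$ is nonnegative and the limsup above is $\ge 0$. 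Combining with the previous display gives $F^{\circ}(x^{*},v)\ge 0$, and since $v$ was arbitrary this is exactly $0\in\partial F(x^{*})$.

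The step I expect to be the main obstacle --- or rather the only point demanding care --- is the passage between the chart and the manifold: one must verify that $u^{*}+tw$ remains in $\phi(U)$ for small $t$ (using openness of $\phi(U)$), that $c(t)=\phi^{-1}(u^{*}+tw)$ is genuinely a manifold curve realizing $v$ as its velocity (using that $\phi$ is a diffeomorphism onto its image, so $D\phi^{-1}(u^{*})=[D\phi(x^{*})]^{-1}$), and that the restriction of the defining limsup to $y\equiv x^{*}$ is a legitimate lower bound. Notably, only this ``easy'' inequality is needed: I do not have to evaluate $F^{\circ}$ exactly, nor invoke regularity of $F$ (Lemmas \ref{lemma: manifold1}--\ref{lemma: manifold2}), since the generalized Clarke directional derivative always dominates the one-sided ordinary directional derivative along the curve $c$, and local minimality makes the latter nonnegative.
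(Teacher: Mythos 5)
Your proof is correct, but note that the paper itself offers no proof of this lemma at all: it is imported verbatim as Theorem 4.1 of the cited reference \cite{Zhang2013Optimality}, so there is no internal argument to compare against. Your chart-based argument is the standard one and it is sound: $0\in\partial F(x^{*})$ is by definition equivalent to $F^{\circ}(x^{*},v)\ge 0$ for all $v\in T_{x^{*}}\mathcal{M}$; the defining limsup for $F^{\circ}$ ranges over all approaches $y\to x^{*}$, $t\downarrow 0$, and the constant approach $y\equiv x^{*}$ is a legitimate sub-collection, so the limsup dominates the Dini-type quotient $\limsup_{t\downarrow 0}\bigl(g(u^{*}+tw)-g(u^{*})\bigr)/t$; openness of $\phi(U)$ and continuity of $\phi^{-1}$ put $c(t)=\phi^{-1}(u^{*}+tw)$ in $\mathcal{M}$ near $x^{*}$, where local minimality forces every difference quotient to be nonnegative. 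One small remark: the verification that $c$ realizes $v$ as its velocity, i.e.\ $c'(0)=D\phi^{-1}(u^{*})[w]=v$, is superfluous --- your argument only uses that $c(t)$ lies in $\mathcal{M}$ and converges to $x^{*}$, never the value of $c'(0)$; the tangent direction $v$ enters solely through $w=D\phi(x^{*})[v]$ inside the definition of $F^{\circ}$. You are also right that regularity (Lemmas \ref{lemma: manifold1}--\ref{lemma: manifold2}) is not needed: the first-order necessary condition holds for any locally Lipschitz $F$, which is exactly the generality in which the paper uses the lemma (within the proof of Theorem \ref{theorem51reweight}).
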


To derive the factorization strategy for the weighted quasi-norm, we require some analytical tools concerning the singular value vectors. Let's introduce some notation regarding singular values. For a matrix $X\in\mathbb{R}^{m\times n}$, its singular value decomposition (SVD) is given by $X=U\Sigma V^T$, where $\Sigma={\rm diag}(\sigma)$ and $\sigma\in\mathbb{R}^{\min{m,n}}$ represents the singular value vector. To simplify notation, we use $\sigma(X)$ to denote the singular value vector function of the matrix $X$.

Lewis \cite{lewis1995convex} provided a differential expression for compositions of singular value vectors and absolutely symmetric.
\begin{definition}
	A generalized function $p:\ \mathbb{R}^n\rightarrow [-\infty,+\infty]$ is called absolutely symmetric if $p(\gamma)=p(\sigma({\rm diag}(\gamma)))$ holds for any vector $\gamma\in\mathbb{R}^n$.
\end{definition}
\begin{lemma}[Corollary 2.5 \cite{lewis1995convex}]\label{lemma:diff-of-sigma}
	If the function $p$ is absolutely symmetric, then
	\begin{equation*}
		\partial (p\circ\sigma)(X)=\{U{\rm diag}(\mu)V^T:\ \mu\in\partial f(\sigma(X)),\ X=U{\rm diag}(\sigma(X))V^T\text{ is the SVD of $X$}\}.
	\end{equation*}
\end{lemma}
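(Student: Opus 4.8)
The plan is to prove the statement through convex duality, with von Neumann's trace inequality as the central tool. Throughout I read the $f$ on the right-hand side as the same absolutely symmetric $p$ (this appears to be a typographical slip), and I treat $p$ as a closed convex function so that $p\circ\sigma$ is closed convex and $\partial$ denotes the convex subdifferential, which is the setting of Lewis's corollary. The key auxiliary identity I would establish first is the conjugacy formula
\begin{equation*}
  (p\circ\sigma)^* = p^*\circ\sigma,
\end{equation*}
from which the subdifferential description follows by the Fenchel--Young equality.

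To prove the conjugacy formula, recall von Neumann's trace inequality: for $X,Y\in\mathbb{R}^{m\times n}$ one has $\langle X,Y\rangle\le\langle\sigma(X),\sigma(Y)\rangle$, with equality if and only if $X$ and $Y$ admit a simultaneous ordered SVD, i.e.\ there exist orthogonal $U,V$ with $X=U{\rm diag}(\sigma(X))V^T$ and $Y=U{\rm diag}(\sigma(Y))V^T$. First I would bound the conjugate from above,
\begin{equation*}
  (p\circ\sigma)^*(Y)=\sup_X\bigl(\langle X,Y\rangle-p(\sigma(X))\bigr)\le\sup_X\bigl(\langle\sigma(X),\sigma(Y)\rangle-p(\sigma(X))\bigr).
\end{equation*}
Since $\sigma(X)$ ranges over the nonnegative decreasing cone as $X$ varies, and since the Hardy--Littlewood--P\'olya rearrangement inequality together with the absolute symmetry of $p$ shows that $p^*$ is again absolutely symmetric and that the supremum defining $p^*(\sigma(Y))$ is attained on that cone, the right-hand side equals $p^*(\sigma(Y))$. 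The reverse inequality is obtained by choosing a maximizing vector $x^*$ for $p^*(\sigma(Y))$ and a matrix $X$ with $\sigma(X)=x^*$ that shares the singular vectors of $Y$, so that von Neumann's inequality holds with equality; this yields $(p\circ\sigma)^*(Y)=p^*(\sigma(Y))$.

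With the conjugacy formula in hand, I would characterize the subdifferential by the Fenchel--Young equality: $Y\in\partial(p\circ\sigma)(X)$ holds if and only if $(p\circ\sigma)(X)+(p\circ\sigma)^*(Y)=\langle X,Y\rangle$, i.e.
\begin{equation*}
  p(\sigma(X))+p^*(\sigma(Y))=\langle X,Y\rangle.
\end{equation*}
Chaining the two inequalities $\langle X,Y\rangle\le\langle\sigma(X),\sigma(Y)\rangle\le p(\sigma(X))+p^*(\sigma(Y))$ (von Neumann followed by Fenchel--Young for $p$), equality on the left forces $X$ and $Y$ to share an ordered SVD $X=U{\rm diag}(\sigma(X))V^T$, $Y=U{\rm diag}(\sigma(Y))V^T$, while equality on the right forces $\sigma(Y)\in\partial p(\sigma(X))$. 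Setting $\mu=\sigma(Y)$ gives $Y=U{\rm diag}(\mu)V^T$ with $\mu\in\partial p(\sigma(X))$, which is the claimed form. For the reverse inclusion I would start from any $\mu\in\partial p(\sigma(X))$ and $Y=U{\rm diag}(\mu)V^T$, then verify that both inequalities above become equalities, so that $Y$ is a subgradient.

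The step I expect to be the main obstacle is the reverse inclusion, specifically showing that an arbitrary subgradient $\mu\in\partial p(\sigma(X))$ is compatible with the SVD of $X$. Because $p$ is absolutely symmetric, its subgradients at the nonnegative decreasing point $\sigma(X)$ inherit a matching ordering and nonnegativity, and they are constant on index blocks where the singular values of $X$ coincide; this block structure is exactly what is needed to guarantee $\sigma(Y)=\mu$ (up to the harmless reordering permitted by repeated singular values) and that $U{\rm diag}(\mu)V^T$ is well defined independently of the freedom in $U,V$. Handling these invariance and ordering conditions, together with the boundary bookkeeping for zero singular values in the rectangular case, is the delicate part; the conjugacy computation and the equality analysis of von Neumann's inequality are then essentially mechanical.
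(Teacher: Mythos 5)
The paper offers no proof of this lemma: it is quoted, label and all, directly from Lewis \cite{lewis1995convex}, so there is no internal argument to compare against. Your proposal is essentially Lewis's own proof of that corollary --- the conjugacy formula $(p\circ\sigma)^*=p^*\circ\sigma$ obtained from von Neumann's trace inequality plus rearrangement, followed by the Fenchel--Young equality characterization of subgradients and the equality case of von Neumann --- and it is correct in the setting you fix.

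Two remarks, both about scope rather than about the mechanics of your argument. First, the convexity you impose on $p$ is not needed and is best dropped: the conjugacy formula holds for any proper absolutely symmetric $p$, and Fenchel--Young equality characterizes the subdifferential defined by the global inequality $p(y)\ge p(x)+\langle s,y-x\rangle$ whether or not $p$ is convex. This matters here because the paper invokes the lemma (in the proof of Theorem \ref{theorem51reweight}) for the nonconvex functions $p(x)=\sum_i|x_i|^q$ with $q<1$ and $p(x)=\sum_i\log(x_i^{1/k}+\epsilon)$, for which the convex-analytic subdifferential you characterize is \emph{empty} at the points of interest (positive entries); the statement actually used there is the Fr\'echet/Clarke version for locally Lipschitz $p$, which duality alone does not deliver and which is the content of \cite{lewis2005nonsmooth}, also cited by the paper --- so a proof intended to support this paper's usage must go through that nonsmooth route rather than conjugacy. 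Second, the reverse inclusion you single out as the main obstacle is in fact a one-line verification: for $\mu\in\partial p(\sigma(X))$ and $Y=U{\rm diag}(\mu)V^T$ one has $\langle X,Y\rangle=\langle\sigma(X),\mu\rangle$ and $p^*(\sigma(Y))=p^*(\mu)$ by absolute symmetry of $p^*$ (since $\sigma(Y)$ is the decreasingly sorted vector of the $|\mu_i|$), so the matrix Fenchel--Young equality follows immediately from the vector one; no analysis of orderings, sign patterns, or repeated-singular-value blocks is required.
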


Shang et al. \cite{shang2017bilinear} provided the following factorization forms for Schatten-q norms.
\begin{lemma}\label{lemma:schatten-12-23}
	For a matrix $X\in\mathbb{R}^{m\times n}$ with ${\rm rank}(X)=r\leq d$, the factorization form of the Schatten-0.5 norm is:
	\begin{equation*}
		\|X\|_{S_{\frac{1}{2}}}=\min_{A\in\mathbb{R}^{m\times d},B\in\mathbb{R}^{n\times d},X=AB^T}\frac{1}{4}\left(\|A\|_{*}+\|B\|_{*}\right)^2.
	\end{equation*}
	The factorization form of the Schatten-$2/3$ norm is:
	\begin{equation*}
		\|X\|_{S_{\frac{2}{3}}}=\min_{A\in\mathbb{R}^{m\times d},B\in\mathbb{R}^{n\times d},X=AB^T}\left(\frac{\|A\|_{F}^2+2\|B\|_{*}}{3}\right)^{\frac{3}{2}}.
	\end{equation*}
\end{lemma}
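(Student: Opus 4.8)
The plan is to prove each identity by the standard two-sided argument: first exhibit an explicit factorization $X=AB^T$ that attains the value on the right, which shows the minimum is at most the Schatten quasi-norm; then show that \emph{every} admissible factorization produces a value at least as large as the Schatten quasi-norm, which forces the minimum to equal it. Throughout I write the reduced SVD as $X=U\Sigma V^T$ with $\Sigma=\mathrm{diag}(\sigma_1,\dots,\sigma_r)$, $r=\mathrm{rank}(X)\le d$, and pad $U,V$ with zero columns so that the inner dimension is $d$.

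For achievability I would split the diagonal factor. For $q=\tfrac12$, take $A=U\Sigma^{1/2}$ and $B=V\Sigma^{1/2}$; since $U,V$ have orthonormal columns, $A^TA=\Sigma$ and $B^TB=\Sigma$, so $\|A\|_*=\|B\|_*=\sum_i\sigma_i^{1/2}$ while $AB^T=U\Sigma V^T=X$, giving $\tfrac14(\|A\|_*+\|B\|_*)^2=\big(\sum_i\sigma_i^{1/2}\big)^2=\|X\|_{S_{1/2}}$. For $q=\tfrac23$, take $A=U\Sigma^{1/3}$ and $B=V\Sigma^{2/3}$, so that $\|A\|_F^2=\mathrm{tr}(\Sigma^{2/3})=\sum_i\sigma_i^{2/3}$ and $\|B\|_*=\sum_i\sigma_i^{2/3}$, whence $\left(\tfrac{\|A\|_F^2+2\|B\|_*}{3}\right)^{3/2}=\big(\sum_i\sigma_i^{2/3}\big)^{3/2}=\|X\|_{S_{2/3}}$. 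This identifies the minimizer and establishes the inequality ``$\min\le$''.

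For the lower bound the key tool is H\"older's inequality for Schatten quasi-norms: if $1/r=1/p+1/q$ with $p,q,r>0$, then $\|AB^T\|_{S_r}\le\|A\|_{S_p}\|B\|_{S_q}$. With $(p,q,r)=(1,1,\tfrac12)$ this reads $\|X\|_{S_{1/2}}\le\|A\|_*\|B\|_*$, and the two-term AM--GM inequality $\|A\|_*\|B\|_*\le\tfrac14(\|A\|_*+\|B\|_*)^2$ then shows every factorization dominates $\|X\|_{S_{1/2}}$. With $(p,q,r)=(2,1,\tfrac23)$ it reads $\|X\|_{S_{2/3}}\le\|A\|_F\|B\|_*$; writing $u=\|A\|_F^2$, $v=\|B\|_*$, the three-term AM--GM inequality $\tfrac{u+v+v}{3}\ge(uv^2)^{1/3}$ gives $\|A\|_F\|B\|_*=u^{1/2}v\le\left(\tfrac{u+2v}{3}\right)^{3/2}$, so again every factorization dominates $\|X\|_{S_{2/3}}$. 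Combining the two directions yields both identities.

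The main obstacle is justifying H\"older's inequality in the quasi-norm regime $r<1$, where $\|\cdot\|_{S_r}$ violates the triangle inequality and the usual duality proof is unavailable. I would route around this through singular values: Horn's multiplicative majorization $\prod_{i\le k}\sigma_i(AB^T)\le\prod_{i\le k}\sigma_i(A)\sigma_i(B)$ is weak log-majorization, and since $t\mapsto e^{ct}$ is convex increasing for every $c>0$, it upgrades to the weak majorization $\sum_i\sigma_i(AB^T)^c\le\sum_i(\sigma_i(A)\sigma_i(B))^c$. Taking $c=r$ and then applying the scalar H\"older inequality with exponents $p/r$ and $q/r$ (which are conjugate because $r/p+r/q=1$) to $\sum_i\sigma_i(A)^r\sigma_i(B)^r$ recovers $\|AB^T\|_{S_r}^r\le\|A\|_{S_p}^r\|B\|_{S_q}^r$, valid for all $r>0$. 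Everything else—the SVD split and the AM--GM steps—is elementary, so this majorization argument carries the real content of the proof.
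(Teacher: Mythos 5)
Your proof is correct, but there is nothing in the paper to compare it against: Lemma~\ref{lemma:schatten-12-23} is stated as an imported result from Shang et al.\ \cite{shang2017bilinear}, and the paper gives no proof of its own, so any argument must be reconstructed independently, which is what you have done. Both directions of your argument are sound. The achievability step (padding the reduced SVD to inner dimension $d$ and splitting $\Sigma$ as $\Sigma^{1/2}\cdot\Sigma^{1/2}$, respectively $\Sigma^{1/3}\cdot\Sigma^{2/3}$) exhibits factorizations attaining the claimed values, and in particular justifies writing $\min$ rather than $\inf$. For the lower bound, the quasi-norm H\"older inequality $\|AB^T\|_{S_r}\le\|A\|_{S_p}\|B\|_{S_q}$ with $1/r=1/p+1/q$, combined with the two- and three-term AM--GM inequalities, is exactly the mechanism used in the source literature; your derivation of it from Horn's weak log-majorization of singular values of a product, the convexity and monotonicity of $t\mapsto e^{ct}$, and scalar H\"older with the conjugate exponents $p/r$ and $q/r$ is the standard and correct route (it is in substance Theorem 3.3.14 of Horn and Johnson's \emph{Topics in Matrix Analysis}, which is what the cited source invokes). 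Two small points are worth making explicit if you write this up: the log-majorization step should be phrased multiplicatively (or via an $\varepsilon$-perturbation passed to the limit) so that zero singular values cause no trouble with logarithms, and Horn's inequality for the rectangular factors $A\in\mathbb{R}^{m\times d}$, $B\in\mathbb{R}^{n\times d}$ follows from the square case by zero-padding. Neither is a genuine gap.
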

Another non-convex function that possesses a bilinear factor matrix formulation is the Logarithmic norm \cite{chen2021logarithmic}.
\begin{lemma}\label{lemma:logdet-BM}
	For a matrix $X\in\mathbb{R}^{m\times n}$ with ${\rm rank}(X)=r\leq d$, the factorization form of the Logarithmic Norm is:
	\begin{equation*}
		2\|X\|_{L}^{1/2}=\min_{A\in\mathbb{R}^{m\times d},B\in\mathbb{R}^{n\times d},X=AB^T}\|A\|_{L}^1+\|B\|_{L}^1.
	\end{equation*}
\end{lemma}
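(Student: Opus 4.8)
The plan is to prove the identity by establishing separately the inequalities "$\leq$" and "$\geq$" between the right-hand minimum and $2\|X\|_L^{1/2}$. Throughout I write $\sigma_i=\sigma_i(X)$ for the singular values of $X$ in decreasing order and $X=U\Sigma V^T$ for its thin SVD padded to $d$ columns, so that $U\in\mathbb{R}^{m\times d}$, $V\in\mathbb{R}^{n\times d}$ have orthonormal columns and $\Sigma={\rm diag}(\sigma_1,\dots,\sigma_d)$ with $\sigma_{r+1}=\cdots=\sigma_d=0$. I will fix at the outset the convention that all three log-norms are summed over the common index range $i=1,\dots,d$, so that the $\log\epsilon$ contributions of vanishing singular values appear equally on both sides and cannot produce a spurious mismatch.

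For the "$\leq$" direction I exhibit an explicit minimizer. Take $A=U\Sigma^{1/2}$ and $B=V\Sigma^{1/2}$; then $AB^T=U\Sigma V^T=X$, and since $U,V$ are column-orthonormal, $A^TA=B^TB=\Sigma$, so $\sigma_i(A)=\sigma_i(B)=\sqrt{\sigma_i}$. Substituting into the definition of $\|\cdot\|_L^1$ gives $\|A\|_L^1+\|B\|_L^1=2\sum_i\log(\sqrt{\sigma_i}+\epsilon)=2\|X\|_L^{1/2}$, which bounds the minimum from above.

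The harder "$\geq$" direction requires that every admissible factorization $X=AB^T$ satisfy $\|A\|_L^1+\|B\|_L^1\geq 2\|X\|_L^{1/2}$, and I would carry it out in three steps. First, a pointwise step: for each $i$ the elementary bound $(\sigma_i(A)+\epsilon)(\sigma_i(B)+\epsilon)\geq(\sqrt{\sigma_i(A)\sigma_i(B)}+\epsilon)^2$, which reduces to the AM--GM inequality $\sigma_i(A)+\sigma_i(B)\geq 2\sqrt{\sigma_i(A)\sigma_i(B)}$, yields $\log(\sigma_i(A)+\epsilon)+\log(\sigma_i(B)+\epsilon)\geq 2\log(\tau_i+\epsilon)$ with $\tau_i:=\sqrt{\sigma_i(A)\sigma_i(B)}$; summing reduces the goal to $\sum_i\log(\tau_i+\epsilon)\geq\sum_i\log(\sqrt{\sigma_i}+\epsilon)$, and since $\sigma_i(A),\sigma_i(B)$ are both nonincreasing, so is $\tau_i$. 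Second, I invoke the multiplicative Weyl (Horn) inequalities for the product $X=AB^T$: for every $k$, $\prod_{i=1}^k\sigma_i\leq\prod_{i=1}^k\sigma_i(A)\sigma_i(B)=\prod_{i=1}^k\tau_i^2$, equivalently $\sum_{i=1}^k\log\sqrt{\sigma_i}\leq\sum_{i=1}^k\log\tau_i$; in majorization language, $(\log\sqrt{\sigma_i})$ is weakly majorized by $(\log\tau_i)$. Third, I apply the scalar function $g(t)=\log(e^t+\epsilon)$, which is increasing and convex because $g''(t)=\epsilon e^t/(e^t+\epsilon)^2>0$ for $\epsilon>0$; by the standard fact that weak majorization is preserved under summation against an increasing convex function, the majorization from the second step gives $\sum_i\log(\sqrt{\sigma_i}+\epsilon)=\sum_i g(\log\sqrt{\sigma_i})\leq\sum_i g(\log\tau_i)=\sum_i\log(\tau_i+\epsilon)$, exactly the inequality required. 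Chaining the three steps closes the direction.

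I expect the third step to be the main obstacle. The pointwise AM--GM bound and the multiplicative singular-value inequalities are each routine, but they deliver information of different type---an additive per-index bound and a multiplicative (log-majorization) bound---and fusing them hinges on the observation that $g(t)=\log(e^t+\epsilon)$ is \emph{simultaneously} increasing and convex, which is precisely the hypothesis under which weak majorization transfers to a sum. The only remaining care is the bookkeeping of summation ranges noted above, which the stated convention settles, so that equality in both directions yields the claimed factorization identity.
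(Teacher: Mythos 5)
The paper never proves this statement: Lemma~\ref{lemma:logdet-BM} is imported verbatim from \cite{chen2021logarithmic} and used as a black box (it enters only as the final inequality in the proof of Conclusion (4) of Theorem~\ref{thm51}). So there is no in-paper proof to compare against; what can be judged is whether your blind argument is a valid self-contained derivation, and it essentially is. The upper bound via the balanced factorization $A=U\Sigma^{1/2}$, $B=V\Sigma^{1/2}$ is exactly right, and the lower bound correctly chains three ingredients: the per-index AM--GM bound $(\sigma_i(A)+\epsilon)(\sigma_i(B)+\epsilon)\geq(\tau_i+\epsilon)^2$ with $\tau_i=\sqrt{\sigma_i(A)\sigma_i(B)}$, Horn's multiplicative inequality $\prod_{i\leq k}\sigma_i(X)\leq\prod_{i\leq k}\sigma_i(A)\sigma_i(B)$ for $X=AB^T$ (weak log-majorization), and the transfer of weak majorization through the increasing convex function $g(t)=\log(e^t+\epsilon)$ (Tomi\'c--Weyl). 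This route is notably more elementary than the machinery the paper builds for its \emph{weighted} generalizations (Theorem~\ref{theorem51reweight} runs through Stiefel-manifold first-order conditions, block-diagonal structure lemmas, and rearrangement inequalities), and it isolates the precise reason the result holds: $\log(e^t+\epsilon)$ is simultaneously increasing and convex.

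Two points need tightening before this is airtight. First, when ${\rm rank}(X)=r<d$ the log-domain vectors $(\log\sqrt{\sigma_i})$ and $(\log\tau_i)$ contain $-\infty$ entries, and the majorization-plus-convexity theorem you invoke is stated for finite real vectors; you need a truncation argument (replace $-\infty$ by $-N$, check that the weak majorization survives --- it does, since $\tau_i>0$ for $i\leq r$ because ${\rm rank}(A),{\rm rank}(B)\geq{\rm rank}(X)$ --- and let $N\to\infty$ using $g(-N)\to\log\epsilon$). Second, your summation-range convention is not mere bookkeeping: under the paper's literal definition, where $\|\cdot\|_L^p$ sums over $\min\{m,n\}$ singular values, the identity is actually false whenever $d\neq\min\{m,n\}$, because the two sides carry different numbers of $\log\epsilon$ terms (these do not vanish since $\log\epsilon\neq 0$). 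The convention you fix --- summing over $d$ indices throughout --- is the repair under which both the statement and your proof are correct; that defect is inherited from the cited source, not introduced by you.
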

\section{Factorization Strategy }\label{sec:re-2}

In this section, we will establish the factorization strategy for the weighted Schatten-$q$ norm and the weighted logarithmic norm. We begin by providing the definition of the block diagonal matrix form.

\begin{definition}
	A matrix $X\in\mathbb{R}^{n\times n}$ is said to have a block diagonal matrix form if it can be written as
	\begin{equation*}
		X=\begin{bmatrix}
			X_1 & 0 & \dots & 0\\
			0 & X_2 & \dots & 0\\
			0 & 0 & \ddots & 0\\
			0 & 0 & \dots & X_r
		\end{bmatrix},
	\end{equation*}
	where $X_i\in \mathbb{R}^{n_i\times n_i}, 1\le i\leq r$ with $\sum^r_{i=1}n_i=n$. We say that  such a matrix $X$ has the form $(n_1,\dots,n_r)$.
\end{definition}

The following lemma establishes the connection between symmetric matrices and diagonal matrices.

\begin{lemma}\label{lemma:sym-and-diag}
	Let $A\in\mathbb{R}^{n\times n}$ be a symmetric matrix and  $S$ be  a diagonal matrix in  the form of 
	\begin{equation*}
		S=\begin{bmatrix}
			s_1I_1 & 0 & \dots & 0\\
			0 & s_2I_2 & \dots & 0\\
			0 & 0 & \ddots & 0\\
			0 & 0 & \dots & s_rI_r
		\end{bmatrix},
	\end{equation*}
	where $s_1,\dots,s_r$ are nonzero distinct real numbers, $I_i$ is an  $n_i\times n_i$ identity matrix for $i=1,\ldots,r$ with $\sum^r_{i=1}n_i=n$. If $AS$ is symmetric, then $A$ and $S$ have the same form $(n_1,\dots,n_r)$.
\end{lemma}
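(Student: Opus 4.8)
The plan is to partition $A$ into blocks that conform to the block structure $(n_1,\dots,n_r)$ of $S$, and then read off what the two symmetry hypotheses force on each off-diagonal block. Write $A=(A_{ij})_{1\le i,j\le r}$ with $A_{ij}\in\mathbb{R}^{n_i\times n_j}$, so that $A_{ij}$ occupies the same index range as the $(i,j)$ block of $S$. Since $A$ is symmetric, the blocks satisfy $A_{ji}=A_{ij}^{T}$ for all $i,j$.

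Next I would compute the block structure of the product $AS$. Because $S$ is block-scalar (its $j$-th block-column is multiplication by $s_jI_j$), right multiplication by $S$ simply rescales the $j$-th block-column of $A$, giving $(AS)_{ij}=A_{ij}\,s_jI_j=s_jA_{ij}$. The hypothesis that $AS$ is symmetric means $(AS)_{ij}=\big((AS)_{ji}\big)^{T}$ for every pair $(i,j)$, that is, $s_jA_{ij}=s_iA_{ji}^{T}$.

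Now I would combine the two symmetry relations. Substituting $A_{ji}^{T}=A_{ij}$ (from the symmetry of $A$) into the previous identity yields $s_jA_{ij}=s_iA_{ij}$, hence $(s_i-s_j)A_{ij}=0$. For $i\ne j$ the scalars $s_i,s_j$ are distinct by hypothesis, so $s_i-s_j\ne 0$ and therefore $A_{ij}=0$. Thus every off-diagonal block of $A$ vanishes, which is precisely the statement that $A$ has block-diagonal form $(n_1,\dots,n_r)$, as claimed.

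The argument carries no serious obstacle; it reduces to a one-line computation once the bookkeeping is arranged. The only point that requires care is the interplay of the two symmetry conditions: one must feed the symmetry of $A$ (relating $A_{ji}$ to $A_{ij}$) into the symmetry condition on $AS$ (relating the $(i,j)$ and $(j,i)$ blocks of the product), so that the common block $A_{ij}$ factors out and the distinctness of the $s_i$ can be invoked. Writing $S$ in its block-scalar form, rather than as an arbitrary diagonal matrix, is what makes the identity $(AS)_{ij}=s_jA_{ij}$ transparent and drives the whole proof.
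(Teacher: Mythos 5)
Your proof is correct and follows essentially the same route as the paper: partition $A$ conformally with $S$, use the two symmetry hypotheses to obtain $s_jA_{ij}=s_iA_{ij}$, and invoke distinctness of the $s_i$ to kill the off-diagonal blocks. The only cosmetic difference is that the paper first condenses the two symmetry conditions into the matrix identity $AS=(AS)^T=S^TA^T=SA$ and then compares blocks, whereas you compare the transposed blocks directly; the computation is identical.
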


\begin{proof}
	Let
	\begin{equation*}
		A=\begin{bmatrix}
			A_{11} & A_{12} & \dots & A_{1r}\\
			A_{21} & A_{22} & \dots & A_{2r}\\
			\dots & \dots & \ddots & \dots\\
			A_{r1} & A_{r2} & \dots & A_{rr}
		\end{bmatrix},
	\end{equation*}
	where $A_{ij}\in\mathbb{R}^{n_i\times n_j}$. Since $AS$, $A$, and $S$ are all symmetric, we have $AS=SA$, i.e.,
	\begin{equation*}
		\begin{bmatrix}
			s_1A_{11} & s_2A_{12} & \dots & s_rA_{1r}\\
			s_1A_{21} & s_2A_{22} & \dots & s_rA_{2r}\\
			\dots & \dots & \ddots & \dots\\
			s_1A_{r1} & s_2A_{r2} & \dots & s_rA_{rr}
		\end{bmatrix}=AS=SA=\begin{bmatrix}
			s_1A_{11} & s_1A_{12} & \dots & s_1A_{1r}\\
			s_2A_{21} & s_2A_{22} & \dots & s_2A_{2r}\\
			\dots & \dots & \ddots & \dots\\
			s_rA_{r1} & s_rA_{r2} & \dots & s_rA_{rr}
		\end{bmatrix},
	\end{equation*}
	Hence, for $i\ne j$, it holds $s_iA_{ij}=s_jA_{ij}$, which together with $s_i\ne s_j\ne 0$, implies $A_{ij}=0$. So, $A$ has the form $(n_1,\dots,n_r)$.
\end{proof}

Let $k$ be a positive integer. The following lemma demonstrates the uniqueness of the $1/k$th power form of symmetric positive definite matrices. 

\begin{lemma}\label{lemma:unique-1k-matrix}
	Let $A, B\in\mathbb{R}^{n\times n}$ be two symmetric positive definite matrices. If $A^k=B^k$ for some positive integer $k$, then $A=B$.
\end{lemma}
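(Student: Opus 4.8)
The plan is to use the spectral theorem for real symmetric matrices together with the fact that $t \mapsto t^k$ is strictly increasing, hence injective, as a map from $(0,\infty)$ onto $(0,\infty)$, so that every positive real number has a unique positive $k$-th root. First I would set $C := A^k = B^k$ and record that $C$ is again symmetric positive definite: it is symmetric because $A$ and $B$ are, and it is positive definite because the eigenvalues of $A^k$ are the $k$-th powers of the positive eigenvalues of $A$. Diagonalizing $C = Q\,\mathrm{diag}(\mu_1,\dots,\mu_n)\,Q^T$ with $Q$ orthogonal and $\mu_i>0$ then fixes the spectral data that both $A$ and $B$ must be compatible with.

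The core idea is to exhibit a \emph{single} polynomial $q$, depending only on $C$, that reconstructs the positive $k$-th root and simultaneously equals both $A$ and $B$. Let $\mu_{(1)},\dots,\mu_{(s)}$ be the distinct eigenvalues of $C$, and choose by Lagrange interpolation a polynomial $q$ with $q(\mu_{(j)}) = \mu_{(j)}^{1/k}$ for each $j$, where $\mu_{(j)}^{1/k}$ is the unique positive $k$-th root. I would then check that $q(C) = A$: writing $A = P\,\mathrm{diag}(\lambda_1,\dots,\lambda_n)\,P^T$ with $\lambda_i>0$, we have $C = A^k = P\,\mathrm{diag}(\lambda_1^k,\dots,\lambda_n^k)\,P^T$, so each $\lambda_i^k$ is an eigenvalue of $C$, and $q(C) = P\,\mathrm{diag}(q(\lambda_1^k),\dots,q(\lambda_n^k))\,P^T = P\,\mathrm{diag}(\lambda_1,\dots,\lambda_n)\,P^T = A$, using $(\lambda_i^k)^{1/k} = \lambda_i$ for $\lambda_i>0$. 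The identical argument applied to $B$ yields $q(C) = B$, whence $A = B$.

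The step that needs the most care is verifying that the \emph{same} polynomial $q$ serves both matrices, i.e. that $q$ can be built from $C$ alone without any prior knowledge of $A$ or $B$. This rests precisely on the injectivity of the power map on the positive reals: each eigenvalue $\lambda_i$ of $A$ (and likewise each eigenvalue of $B$) is forced to be the unique positive $k$-th root of the corresponding eigenvalue $\lambda_i^k$ of $C$, so evaluating $q$ at the eigenvalues of $C$ is unambiguous and returns exactly the spectrum of $A$ (respectively $B$). Positive definiteness is essential here, since without it the $k$-th root is not unique (e.g. for even $k$, or through complex roots).

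As an alternative that avoids interpolation, I could argue directly on eigenspaces. Since $A$ commutes with $C = A^k$, it preserves each eigenspace $W_\mu$ of $C$; the restriction $A|_{W_\mu}$ is symmetric positive definite and satisfies $(A|_{W_\mu})^k = \mu I$, so each of its eigenvalues $\alpha$ obeys $\alpha^k = \mu$ and hence $\alpha = \mu^{1/k}$, giving $A|_{W_\mu} = \mu^{1/k} I$. The same reasoning gives $B|_{W_\mu} = \mu^{1/k} I$, and since the eigenspaces $W_\mu$ depend only on $C$, the two operators agree on every $W_\mu$ and therefore $A = B$ on all of $\mathbb{R}^n$.
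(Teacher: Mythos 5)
Your proof is correct, but it takes a genuinely different route from the paper's. The paper argues eigenvector-by-eigenvector on $A$: for each eigenpair $(\lambda_i,x_i)$ of $A$ it uses the telescoping factorization
\begin{equation*}
0=(B^k-\lambda_i^k I)x_i=\Bigl(\sum_{t=0}^{k-1}\lambda_i^{t}B^{k-1-t}\Bigr)(B-\lambda_iI)x_i,
\end{equation*}
observes that the first factor is symmetric positive definite (here is where $\lambda_i>0$ and $B\succ0$ enter), hence invertible, and concludes $(B-\lambda_iI)x_i=0$; so every eigenpair of $A$ is an eigenpair of $B$ and the two spectral decompositions coincide. Your main argument instead works with the common power $C=A^k=B^k$ and exhibits a single interpolation polynomial $q$ with $q(\mu)=\mu^{1/k}$ on the spectrum of $C$, so that $A=q(C)=B$; your alternative restricts $A$ and $B$ to the eigenspaces of $C$ and pins both restrictions down as $\mu^{1/k}I$. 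All three proofs ultimately rest on the injectivity of $t\mapsto t^k$ on $(0,\infty)$, and both of yours are complete and sound. What the paper's factorization buys is a very short, self-contained computation needing nothing beyond the algebraic identity and positive definiteness of the cofactor. What your polynomial argument buys is generality and reusability: it shows the $k$-th root is a polynomial in $C$ (hence commutes with everything commuting with $C$), and the same template proves uniqueness for any function injective on the spectrum and extends readily to normal operators; your eigenspace variant is closest in spirit to the paper but organizes the argument around the spectral data of $C$ rather than of $A$, which makes the symmetry between $A$ and $B$ manifest.
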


\begin{proof}
	Let $\lambda_i$ and $x_i$ for $1\leq i\leq n$ be the eigenvalues and corresponding eigenvectors of $A$. It follows from $A^k=B^k$ that
	\begin{equation*}
		0=(B^k-\lambda_i^k I)x_i=\sum_{t=0}^{k-1}\lambda_i^{t}B^{k-1-t}(B-\lambda_iI)x_i,\quad i=1,\ldots,n.
	\end{equation*}
	which yields $(B-\lambda_iI)x_i=0$ for any $1\leq i\leq n$ since the matrix $\sum_{t=0}^{k-1}\lambda_i^{t}B^{k-1-t}$ is symmetric positive definite. This means $(\lambda_i,x_i)$ is also an eigenpair of $B$. Since $A$ and $B$ are symmetric positive definite, their spectral decompositions have the same rank-one decomposition, thus $A=B$.
\end{proof}

We will now present some properties of symmetric positive definite matrices with block diagonal matrix forms.

\begin{lemma}\label{lemma:unique-form-1k-matrix}
	Let  $A\in\mathbb{R}^{n\times n}$ be a symmetric positive definite matrix with the eigenvalue decomposition $A=USU^T$. If $A$ has the form $(n_1,\dots,n_r)$, then $US^{1/k}U^T$ and $US^{-1/k}U^T$ with  some positive integer $k$ also have the form $(n_1,\dots,n_r)$.
\end{lemma}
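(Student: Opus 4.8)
The plan is to exploit the uniqueness of the symmetric positive definite $k$th root established in Lemma \ref{lemma:unique-1k-matrix}. First I would observe that $US^{1/k}U^T$ is precisely the SPD $k$th root of $A$: since $S^{1/k}$ is a diagonal matrix with positive entries, the matrix $US^{1/k}U^T$ is symmetric positive definite, and $(US^{1/k}U^T)^k = USU^T = A$. Thus it suffices to exhibit \emph{one} symmetric positive definite matrix of the form $(n_1,\dots,n_r)$ whose $k$th power equals $A$; uniqueness then forces it to coincide with $US^{1/k}U^T$.

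To construct such a matrix, I would write $A$ according to its block structure as $A=\mathrm{blockdiag}(A_1,\dots,A_r)$ with $A_i\in\mathbb{R}^{n_i\times n_i}$. Each diagonal block $A_i$ is itself symmetric positive definite, because restricting the positive definite quadratic form $x^T A x$ to the coordinate subspace indexed by the $i$th block yields the quadratic form $y^T A_i y$. Letting $A_i^{1/k}$ denote the SPD $k$th root of $A_i$, I set $B=\mathrm{blockdiag}(A_1^{1/k},\dots,A_r^{1/k})$. Then $B$ is symmetric positive definite, has the form $(n_1,\dots,n_r)$ by construction, and satisfies $B^k=\mathrm{blockdiag}(A_1,\dots,A_r)=A$, since matrix powers act block-by-block on a block diagonal matrix.

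Applying Lemma \ref{lemma:unique-1k-matrix} to the two SPD matrices $B$ and $US^{1/k}U^T$, both of which have $k$th power $A$, I conclude $B=US^{1/k}U^T$, so $US^{1/k}U^T$ inherits the form $(n_1,\dots,n_r)$. For the negative exponent, I would note that $A^{-1}=US^{-1}U^T$ is again symmetric positive definite and block diagonal of the same form $(n_1,\dots,n_r)$ (the inverse of a block diagonal matrix is block diagonal with the inverted blocks), and that $US^{-1/k}U^T$ is exactly the SPD $k$th root of $A^{-1}$. Repeating the construction above with $A^{-1}$ in place of $A$ then shows $US^{-1/k}U^T$ has the form $(n_1,\dots,n_r)$.

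The argument is essentially bookkeeping once the uniqueness lemma is in hand, so I do not anticipate a genuine obstacle. The only point requiring a little care is the verification that the diagonal blocks of an SPD matrix are themselves SPD and that assembling their roots produces a matrix reproducing $A$ block-by-block; making the ``restriction to a coordinate subspace'' claim precise is the one place where I would slow down.
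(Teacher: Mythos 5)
Your proposal is correct and follows essentially the same route as the paper's own proof: decompose $A$ into its SPD diagonal blocks, form the block-diagonal matrix of their $k$th roots, and invoke Lemma \ref{lemma:unique-1k-matrix} to identify it with $US^{1/k}U^T$, then handle $US^{-1/k}U^T$ via the inverse. Your treatment of the negative exponent (applying the same argument to $A^{-1}$) is just a slightly more explicit version of the paper's one-line appeal to uniqueness of the inverse.
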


\begin{proof}
	Denote the form of $A$ as ${\rm diag}(\triangle_i,1\leq i\leq r)$. Then $\triangle_i$ is also symmetric positive definite, and ${\rm diag}(\triangle_i^{1/k},1\leq i\leq r)$ has the form $(n_1,\dots,n_r)$, whose $k$th power is $A$. Since $(US^{1/k}U^T)^k=USU^T=A$, by Lemma \ref{lemma:unique-1k-matrix}, $US^{1/k}U^T={\rm diag}(\triangle_i^{1/k},1\leq i\leq r)$ has the form $(n_1,\dots,n_r)$. On the other hand, since the inverse of a matrix is unique, $US^{-1/k}U^T={\rm diag}(\triangle_i^{-1/k},1\leq i\leq r)$ also has the form $(n_1,\dots,n_r)$.
\end{proof}

\begin{lemma}\label{lemma:unique-form-log-matrix}
	Let  $A\in\mathbb{R}^{n\times n}$ be a symmetric positive definite matrix with the eigenvalue decomposition $A=USU^T$. If $U\hat{S}SU^T$ has the form $(n_1,\dots,n_r)$, in which $\hat{S}={\rm diag}(1/(S_{ii}+\epsilon S_{ii}^{1/2}))$ or $\hat{S}={\rm diag}(1/(S_{ii}+\epsilon))$, then $US^{-1}\hat{S}U^T$ and $USU^T$ also have the form $(n_1,\dots,n_r)$.
\end{lemma}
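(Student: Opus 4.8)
The plan is to recognize every matrix appearing in the statement as a matrix function (in the sense of functional calculus on the symmetric positive definite matrix $A$) of the single matrix $M := U\hat{S}SU^{T}$, and then to invoke the elementary fact that a matrix function of a block-diagonal matrix preserves the block-diagonal form. Writing $S={\rm diag}(S_{ii})$, the hypothesis matrix is $M = U\hat{S}SU^{T} = g(A)$, where $g$ is the scalar function $g(t)=t/(t+\epsilon t^{1/2})$ or $g(t)=t/(t+\epsilon)$ according to the two choices of $\hat{S}$; likewise $US^{-1}\hat{S}U^{T}=h(A)$ with $h(t)=1/(t(t+\epsilon t^{1/2}))$ or $h(t)=1/(t(t+\epsilon))$. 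Note $M$ is again symmetric positive definite, since its eigenvalues $g(S_{ii})$ are positive. The goal thus reduces to expressing both $A=USU^{T}$ and $US^{-1}\hat{S}U^{T}$ as functions of $M$ and transporting the block structure of $M$ to them.

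First I would establish that $g$ is a strictly increasing bijection of $(0,\infty)$ onto its image, so that its continuous inverse $g^{-1}$ is well defined on the spectrum of $M$ and $A=g^{-1}(M)$. For $g(t)=t/(t+\epsilon)=1-\epsilon/(t+\epsilon)$ strict monotonicity is immediate; for $g(t)=t/(t+\epsilon t^{1/2})=t^{1/2}/(t^{1/2}+\epsilon)$ it follows after the substitution $u=t^{1/2}$. At the matrix level I would record the corresponding injectivity on positive definite matrices: if $P,Q$ are symmetric positive definite with $g(P)=g(Q)$, then $P=Q$. In the second case this is immediate from $g(P)=I-\epsilon(P+\epsilon I)^{-1}$, which forces $(P+\epsilon I)^{-1}=(Q+\epsilon I)^{-1}$; in the first case the same manipulation gives $P^{1/2}=Q^{1/2}$, and squaring together with the uniqueness of the positive definite square root (Lemma \ref{lemma:unique-1k-matrix} with $k=2$) yields $P=Q$.

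With invertibility in hand, I would argue as in the proof of Lemma \ref{lemma:unique-form-1k-matrix}. Writing the block-diagonal form of $M$ as ${\rm diag}(M_i, 1\le i\le r)$ with each $M_i$ symmetric positive definite, set $B:={\rm diag}(g^{-1}(M_i), 1\le i\le r)$, which is symmetric positive definite and has the form $(n_1,\dots,n_r)$. Then $g(B)={\rm diag}(g(g^{-1}(M_i)))={\rm diag}(M_i)=M=g(A)$, so the injectivity above gives $A=B$; hence $A=USU^{T}$ has the form $(n_1,\dots,n_r)$. The same device applied to $US^{-1}\hat{S}U^{T}=h(A)=(h\circ g^{-1})(M)={\rm diag}((h\circ g^{-1})(M_i))$ shows that it too has the form $(n_1,\dots,n_r)$, completing the proof. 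The one step needing care — and the main obstacle — is the passage from the scalar monotonicity of $g$ to the injectivity of the matrix map $g(\cdot)$ on positive definite matrices; I expect to handle it through the algebraic reductions above so as to reuse Lemma \ref{lemma:unique-1k-matrix} rather than invoke general functional calculus, keeping the argument in line with the rest of the section.
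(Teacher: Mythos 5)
Your proof is correct, but it follows a genuinely different route from the paper's. The paper works forward by explicit algebra: from $U\hat{S}SU^T$ it takes the inverse, subtracts the identity, divides by $\epsilon$, and raises to the power $-2$ to recover $S$ (invoking Lemma~\ref{lemma:unique-form-1k-matrix} at each power/root step to see that the block form survives), and then, for $US^{-1}\hat{S}U^T$, it appeals to the solvability of the quartic $x^4+\epsilon x^3=y$ by radicals (Galois theory) to write $S^{-1}\hat{S}$ as a radical expression in $S^{1/2}$ whose building blocks preserve the form. You instead view everything through the lens of spectral functions: $M=U\hat{S}SU^T=g(A)$ with $g(t)=t/(t+\epsilon t^{1/2})$ or $t/(t+\epsilon)$, prove that $g$ is injective on symmetric positive definite matrices via the resolvent identities $g(P)=I-\epsilon(P+\epsilon I)^{-1}$ and $g(P)=I-\epsilon(P^{1/2}+\epsilon I)^{-1}$ (plus Lemma~\ref{lemma:unique-1k-matrix} for the square root), and then transport the block structure of $M$ to $A=g^{-1}(M)$ and to $h(A)=US^{-1}\hat{S}U^T$ by comparing $A$ with the block-diagonal candidate $B={\rm diag}(g^{-1}(M_i))$. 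What your approach buys is twofold: it handles both choices of $\hat{S}$ and both target matrices uniformly (it would work for any strictly monotone spectral function whose matrix version is injective), and it entirely avoids the paper's weakest step, the loose appeal to Galois theory and radical expressions of matrix arguments, which as written glosses over whether the intermediate quantities in a quartic formula stay symmetric positive definite so that Lemma~\ref{lemma:unique-form-1k-matrix} applies. What the paper's route buys is that it stays inside the concrete toolkit already set up in the section (repeated use of Lemma~\ref{lemma:unique-form-1k-matrix}) without ever needing the notion of an inverse spectral map; your argument, by contrast, implicitly relies on the standard well-definedness of functional calculus (independence of the chosen eigendecomposition, and the blockwise identity $g({\rm diag}(B_i))={\rm diag}(g(B_i))$), which is elementary but worth stating explicitly if you want the write-up to be self-contained at the level of the rest of the section.
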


\begin{proof}
    First, let $U\hat{S}SU^T$ with  $\hat{S}={\rm diag}(1/(S_{ii}+\epsilon S_{ii}^{1/2}))$ have the form $(n_1,\dots,n_r)$. From the definition of $\hat{S}$ we know every element in ${\rm diag}(\hat{S}S)$ less than 1. Hence, $U(((\hat{S}S)^{-1}-I)/\epsilon)U^T$ is positive definite. By Lemma \ref{lemma:unique-form-1k-matrix} we know $U(\hat{S}S)^{-1}U^T$, $U(((\hat{S}S)^{-1}-I)/\epsilon)U^T$ and $U(((\hat{S}S)^{-1}-I)/\epsilon)^{-2}U^T$ all have the form $(n_1,\dots,n_r)$ by sequently. By simple calculation, it is clear that $S=(((\hat{S}S)^{-1}-I)/\epsilon)^{-2}$, which shows $A=USU^T$ has the form $(n_1,\dots,n_r)$. Similarly, By Lemma \ref{lemma:unique-form-1k-matrix} we know $US^{1/2}U^T$ has the same form as $USU^T$, $(n_1,\dots,n_r)$. Actually, by Galois theory, the roots of a quartic equation can be expressed using algebraic operations involving addition, subtraction, multiplication, division, exponentiation, and roots. This means that we could denote the solution of equation $x^4+\epsilon x^3=y$ as $$x=\hat{f}(y, \epsilon, +,-,\times, \div, \sqrt[k]{\cdot}, \text{\^{}}).$$
    Also, we know $$U\hat{S}S^{-1}U^T=U\hat{f}(S^{1/2}, \epsilon, +,-,\times, \div, \sqrt[k]{\cdot}, \text{\^{}})U^T.$$
    Therefore, by Lemma \ref{lemma:unique-form-1k-matrix},  $U\hat{S}S^{-1}U^T$ has form $(n_1,\dots,n_r)$. The proof for the case of $\hat{S}={\rm diag}(1/(S_{ii}+\epsilon))$ is similar.
\end{proof}

Let $\mathbb{Z}$ denote the set of nonnegative integers. We present the main result of this section in the following theorem.

\begin{theorem}\label{theorem51reweight}
	Let $W\in\mathbb{R}^{n\times n}$ and $\Sigma\in\mathbb{R}^{r\times r}$ be diagonal matrices with non-decreasing diagonal elements, i.e.,
	\begin{equation*}
		W_{11}\geq W_{22}\geq\cdots\geq W_{nn}>0,\quad  \Sigma_{11}\geq \Sigma_{22}\geq\cdots\geq \Sigma_{rr}>0,
	\end{equation*}
	and let $\tilde{P}\in\mathbb{R}^{n\times r}$ be a column orthogonal matrix ($n\geq r$). If $(2-q)/q\in \mathbb{Z}$, then
	\begin{equation*}
		\|W^{-1}\tilde{P}\Sigma\|_{S_q}^{q}\geq \sum_{i=1}^{r}\left(\frac{\Sigma_{ii}}{W_{ii}}\right)^{q}.
	\end{equation*}
    and
    \begin{equation*}
		\|W^{-1}\tilde{P}\Sigma\|_{L}^{1/k}\geq \sum_{i=1}^{r}\log\left(\left(\frac{\Sigma_{ii}}{W_{ii}}\right)^{1/k}+\epsilon\right),\ k=1,2.
	\end{equation*}
\end{theorem}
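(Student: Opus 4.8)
The plan is to recast both inequalities as the single statement that a suitable spectral function attains its minimum over the Stiefel manifold at the ``aligned'' configuration. Write $N(\tilde{P})=W^{-1}\tilde{P}\Sigma$, and for the Schatten case set $g(\tilde{P})=\|N(\tilde{P})\|_{S_q}^{q}={\rm tr}\,\psi(N^TN)$ with $\psi(x)=x^{q/2}$, while for the logarithmic case take $\psi(x)=\log(x^{1/(2k)}+\epsilon)$, so that $g(\tilde{P})=\|N(\tilde{P})\|_{L}^{1/k}$. Since $W^{-1}$ and $\Sigma$ are invertible and $\tilde{P}$ has orthonormal columns, $N(\tilde{P})$ has full column rank $r$ for every $\tilde{P}\in{\rm St}(n,r)$; hence $\Phi:=N^TN=\Sigma\tilde{P}^TW^{-2}\tilde{P}\Sigma$ is symmetric positive definite and $\psi$ is smooth at its spectrum, so $g$ is in fact a smooth function on ${\rm St}(n,r)$ (the Clarke machinery of Section~\ref{sec:re-1} specialises to the ordinary Riemannian gradient here, and Lemma~\ref{lemma:diff-of-sigma} supplies the spectral chain rule). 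Evaluating $g$ at the matrix $\tilde{P}_0$ of the first $r$ columns of $I_n$ gives $N={\rm diag}(\Sigma_{ii}/W_{ii})$ padded by zeros, whose singular values are exactly $\Sigma_{ii}/W_{ii}$; thus $g(\tilde{P}_0)$ equals the right-hand side of each claimed inequality. It therefore suffices to prove that $\tilde{P}_0$ is a global minimiser of $g$.

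Next I would extract the structure of a minimiser. As ${\rm St}(n,r)$ is compact a global minimiser $\tilde{P}^*$ exists, and by Lemma~\ref{lemma: manifold3} its Riemannian gradient vanishes. Differentiating gives the Euclidean gradient $\nabla g(\tilde{P})=2\,W^{-2}\tilde{P}\,Y$ with $Y:=\Sigma\,\psi'(\Phi)\,\Sigma$, and the normal space of the Stiefel manifold at $\tilde{P}$ is $\{\tilde{P}S:\ S=S^T\}$. Stationarity therefore amounts to $\tilde{P}^{*T}\nabla g$ being symmetric, i.e. $GY$ symmetric with $G:=\tilde{P}^{*T}W^{-2}\tilde{P}^*$, together with $(I-\tilde{P}^*\tilde{P}^{*T})W^{-2}\tilde{P}^*Y=0$. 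The key observation is that $\psi'>0$ on $(0,\infty)$ for all four functions, so $\psi'(\Phi)\succ 0$ and $Y=\Sigma\psi'(\Phi)\Sigma$ is invertible; cancelling $Y$ in the second relation yields the clean eigenspace identity $W^{-2}\tilde{P}^*=\tilde{P}^*G$, i.e. the range of $\tilde{P}^*$ is an invariant subspace of the diagonal matrix $W^{-2}$.

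From here the block lemmas take over. Because $W^{-2}$ is diagonal, its invariant subspaces are direct sums of subspaces of its coordinate eigenspaces, so I may write $\tilde{P}^*=ER$, where the columns of $E$ are $r$ orthonormal eigenvectors of $W^{-2}$ (standard basis vectors across distinct values of $W$) and $R\in O(r)$ accounts for the residual rotation inside repeated-value blocks. It remains to show that $R$ actually diagonalises $\Phi$, i.e. that $N^{*T}N^*$ is diagonal with entries $(\Sigma_{ii}/W_{ii})^2$. This is exactly where the symmetry condition ``$GY$ symmetric'' is fed into Lemma~\ref{lemma:sym-and-diag}: pairing the symmetric factor with the diagonal-with-distinct-blocks data forces $\Phi$, $\psi'(\Phi)$ and $Y$ into a common block-diagonal form, after which the integer hypothesis $(2-q)/q\in\mathbb{Z}$ makes $\psi$ and $\psi'$ expressible through the integer powers and roots $S^{1/k}$ (and, for the logarithmic norm, through the precise shrinkage forms $1/(S_{ii}+\epsilon S_{ii}^{1/2})$ and $1/(S_{ii}+\epsilon)$) to which Lemmas~\ref{lemma:unique-form-1k-matrix} and~\ref{lemma:unique-form-log-matrix} apply; propagating the block form through these lemmas pins $\Phi$ down to a diagonal matrix. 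Finally, among all admissible selections $E$ and permutations, a rearrangement argument (for the Schatten case, the classical rearrangement inequality applied to the non-increasing sequence $\Sigma_{ii}^{q}$ and the non-decreasing sequence $W_{ii}^{-q}$) selects the same-order pairing, giving $g(\tilde{P}^*)=\sum_{i=1}^{r}(\Sigma_{ii}/W_{ii})^{q}$ and hence both inequalities.

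The main obstacle is precisely this last structural step: stationarity alone does not single out the aligned configuration, since $g$ also has non-aligned critical points that are maxima or saddles, so one must combine the first-order condition with global minimality and the block-diagonalisation lemmas to rule out every tilted rotation $R$, while simultaneously handling the degeneracies caused by repeated values of $W$ or of $\Sigma$. Verifying the hypotheses of Lemmas~\ref{lemma:sym-and-diag}, \ref{lemma:unique-form-1k-matrix} and~\ref{lemma:unique-form-log-matrix}—in particular that the matrices whose roots are taken are genuinely symmetric positive definite and that the ``distinct blocks'' structure is inherited at $\tilde{P}^*$—together with the rearrangement step that fixes the correct pairing, is where the real work lies; the integer condition on $q$ enters exactly to keep all these root and shrinkage operations within the algebraic class covered by those lemmas.
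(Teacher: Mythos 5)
Your proposal is correct and follows essentially the same route as the paper's proof: minimize the spectral function over ${\rm St}(n,r)$, extract first-order stationarity (your pair ``$GY$ symmetric'' and ``$W^{-2}\tilde{P}^*=\tilde{P}^*G$'' is exactly the paper's condition \eqref{lemma1need6} that the Euclidean gradient lies in the normal space), force block-diagonal structure through Lemmas \ref{lemma:sym-and-diag}, \ref{lemma:unique-form-1k-matrix} and \ref{lemma:unique-form-log-matrix} using the hypothesis $(2-q)/q\in\mathbb{Z}$, and finish with the (generalized) rearrangement inequality. Your smooth Gram-matrix formulation and the cancellation of the invertible factor $Y$ are a clean repackaging of the paper's Clarke-subdifferential machinery and SVD identities \eqref{lemma1need7}--\eqref{lemma1need14}, not a different argument.
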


\begin{proof}
	Let $h(\tilde{P})=\|W^{-1}\tilde{P}\Sigma\|_{S_q}^{q}$ and ${\rm St}(n,r)=\{ \tilde{P}\in\mathbb{R}^{n\times r}|\ \tilde{P}^{T}\tilde{P}=I_{r} \}$. We define
	\begin{equation*}
		P\in\mathop{\arg\min}_{\tilde{P}\in St(n,r)}h(\tilde{P}).
	\end{equation*}
Then $0\in \partial h(P)$ from Lemma \ref{lemma: manifold3}. 

Note that the function $h(\tilde{P})=p\circ \sigma(W^{-1}\tilde{P} \Sigma)$, where $p(x)=\sum_{i}|x_i|^q$ or $p(x)=\sum_{i}\log(x_i^{1/k}+\epsilon)$. Since $W,\Sigma$ have non-zero diagonal elements, the matrix $W^{-1}P\Sigma$ has linearly independent columns, i.e., full column rank, thus $\sigma(W^{-1}P\Sigma)$ has no zero elements. This means the function $p$ is differentiable and Lipschitz continuous in the neighborhood of $\sigma(W^{-1}P\Sigma)$. By \cite[Corollary 7.6]{lewis2005nonsmooth} we know $(p\circ \sigma)$ is Fréchet derivative and Lipschitz continuous on $W^{-1}P\Sigma$. Therefore, It follows from Lemma \ref{lemma:diff-of-sigma} that
    \begin{equation*}
        \partial (p\circ \sigma)=\{U\nabla p(\sigma(W^{-1}P\Sigma))V^T\},
    \end{equation*}
    where $W^{-1}P\Sigma=USV^T$ is the SVD of $W^{-1}P\Sigma$. Thus, we have
	\begin{equation}\label{lemma1need5}
		\partial h(P) = \{W^{-1}U\hat{p}\hat{S}V^T\Sigma\},
	\end{equation}
    where 
    \begin{equation}\label{eq:lemma-new1}
        \hat{p}=\left\{\begin{aligned}
            &q, &&\text{ if }p(x)=\sum_{i}|x_i|^q,\\
            &\frac{1}{k}, &&\text{ if }p(x)=\sum_{i}\log(x_i^{1/k}+\epsilon),
        \end{aligned}\right.,
        \ \hat{S}=\left\{\begin{aligned}
            &S^{q-1}, &&\text{ if }p(x)=\sum_{i}|x_i|^q,\\
            &{\rm diag}\left(\frac{1}{S_{ii}+\epsilon S_{ii}^{1-1/k}}\right), &&\text{ if }p(x)=\sum_{i}\log(x_i^{1/k}+\epsilon).
        \end{aligned}\right.
    \end{equation}
	We denote $F=\hat{p}U\hat{S}V^T$. Also, by Lemma \ref{lemma: manifold2}, $h$ is regular along $T_P{\rm St}(n,r)$, and by Lemma \ref{lemma: manifold1}, we have
	\begin{equation}\label{lemma1need1}
		0\in \partial h(P)={\rm Proj}_{T_P{\rm St}(n,r)}(\partial h(P)),
	\end{equation}
	using the representation of the tangent space on the Stiefel manifold from \eqref{eq:manifold-st-tanget}. From \eqref{lemma1need1}, we have
	\begin{equation}\label{lemma1need2}
		0\in\mathop{\arg\min}_{Z\in\mathbb{R}^{n\times r}}\|Z-W^{-1}F\Sigma\|_F^2,\ \text{s.t.}\  P^TZ+Z^TP=0.
	\end{equation}
	The KKT condition of \eqref{lemma1need2} is $\exists \Lambda\in\mathbb{R}^{r\times r}$ such that
	\begin{equation*}
		2(Z^{T}-W^{-1}F\Sigma)+P\Lambda+P\Lambda^T=0,\quad P^TZ+Z^TP=0.
	\end{equation*}
	By \eqref{lemma1need2}, we know $Z^{T}=0$ is an optimal solution, thus we can deduce
	\begin{equation}\label{lemma1need6}
		W^{-1}F\Sigma=P\frac{\Lambda+\Lambda^T}{2}.
	\end{equation}
	From \eqref{lemma1need5} and \eqref{eq:lemma-new1}, we have
	\begin{equation}\label{lemma1need7}
		F=\hat{p}U\hat{S}V^T=\hat{p}USV^TV\hat{S}S^{-1}V^T=\hat{p}W^{-1}P\Sigma V\hat{S}S^{-1}V^T,
	\end{equation}
	and
	\begin{equation}\label{lemma1need8}
		F=\hat{p}U\hat{S}V^T=\hat{p}US^{-1}\hat{S}U^TUSV=\hat{p}US^{-1}\hat{S}U^TW^{-1}P\Sigma.
	\end{equation}
	Combining \eqref{lemma1need6} and \eqref{lemma1need8}, it holds
	\begin{equation}\label{lemma1need9}
		\hat{p}P^TW^{-1}US^{-1}\hat{S}U^TW^{-1}P\Sigma^{2}=\frac{\Lambda+\Lambda^T}{2},
	\end{equation}
 which is a symmetric matrix.
	Note that $\Sigma$ is classified by singular values into the form $(n_1,\dots,n_s)$, i.e.,
	\begin{equation*}
		\Sigma=\begin{bmatrix}
			\sigma_1 I_1 &  &  &  \\
			& \sigma_2 I_2 &  &  \\
			&  & \ddots &  \\
			&  &  & \sigma_s I_s 
		\end{bmatrix},
	\end{equation*}
	where $\sigma_1>\sigma_2>\cdots>\sigma_s$, and $I_1,\dots, I_s$ are $n_1,\dots,n_s$-dimensional identity matrices, respectively. It follows from \eqref{lemma1need9} and Lemma \ref{lemma:sym-and-diag}  that the matrix $P^TW^{-1}US^{-1}\hat{S}U^TW^{-1}P$ has the same block-diagonal form, $(n_1,\dots,n_s)$, as $\Sigma$, since $S^{-1}\hat{S}$ is diagonal. Therefore, we can exchange $\Sigma$ with $P^TW^{-1}US^{-1}\hat{S}U^TW^{-1}P$, i.e.,
	\begin{equation}\label{lemma1need11}
		\frac{\Lambda+\Lambda^T}{2}=\hat{p}\Sigma P^TW^{-1}US^{-1}\hat{S}U^TW^{-1}P\Sigma.
	\end{equation}
	Also, it holds
	\begin{equation}\label{lemma1need10}
		U^TW^{-1}P=U^TW^{-1}P\Sigma \Sigma^{-1}=U^T(USV^T)\Sigma^{-1}=SV^T\Sigma^{-1}.
	\end{equation}
	Combining \eqref{lemma1need11} and \eqref{lemma1need10}, we can see that
    \begin{equation}\label{lemma1need14}
		\frac{\Lambda+\Lambda^T}{2}=\hat{p}\Sigma P^TW^{-1}US^{-1}\hat{S}U^TW^{-1}P\Sigma=\hat{p}VS\hat{S}V^T,
	\end{equation}
 which is invertible.
	When $p(x)=\sum_{i}\log(x_i^{1/2}+\epsilon)$ or $p(x)=\sum_{i}\log(x_i+\epsilon)$, by Lemma \ref{lemma:unique-form-log-matrix}, we know that $VS^{-1}\hat{S}V^T$ is invertible and has the same block-diagonal form $(n_1,\dots,n_s)$ as $P^TW^{-1}US^{-q}U^TW^{-1}P$ and $\Sigma$. When $p(x)=\sum_{i}|x_i|^q$, $S\hat{S}=S^q$, by Lemma \ref{lemma:unique-form-1k-matrix} and $(2-q)/q\in\mathbb{Z}$, we know that $VS^{-1}\hat{S}V^T=VS^{q-2}V^T,\Sigma$ have the same form $(n_1,\dots,n_s)$. Therefore,
	\begin{equation}\label{lemma1need12}
		VS^{-1}\hat{S}V^T\Sigma=\Sigma VS^{-1}\hat{S}V^T.
	\end{equation}
	Similarly, from \eqref{lemma1need6}, \eqref{lemma1need7}, and \eqref{lemma1need12}, we have
	\begin{equation*}
		P\frac{\Lambda+\Lambda^T}{2}P^T=\hat{p}W^{-2}P\Sigma VS^{-1}\hat{S}V^T\Sigma P^T=\hat{p}W^{-2}P\Sigma^{2} VS^{-1}\hat{S}V^T P^T.
	\end{equation*}
	It follows from \eqref{lemma1need14} that
	\begin{equation*}
		PVS\hat{S}V^TP^T=W^{-2}P\Sigma^{2}VS^{-1}\hat{S}V^T P^T.
	\end{equation*}
	Noting that $P^TP=I$, moving the right-hand side to the left and multiplying by $P^T$, we can see that
	\begin{equation*}
		W^{-2}P\Sigma^{2} P^T=PVS^{2}V^TP^T
	\end{equation*}
  is symmetric.
	By Lemma \ref{lemma:sym-and-diag}, we know $P\Sigma^{2}P^T$ has the same block-diagonal form as $W^{-1}$, thus $W^{-1}P\Sigma^{2}P^T$ is symmetric. Define
	\begin{equation*}
		\hat{P}=[P,P^{\bot}],
	\end{equation*}
	where $P^{\bot}$ is the complementary orthogonal basis of $P$. Therefore,
	\begin{equation*}
		\hat{P}^TW^{-1}\hat{P}\begin{bmatrix}\Sigma^2 & \\ & 0\end{bmatrix}=\hat{P}^TW^{-1}\hat{P}\begin{bmatrix}\Sigma^2 & \\ & 0\end{bmatrix}\hat{P}^T\hat{P}=\hat{P}^TW^{-1}P\Sigma^2P^T\hat{P}
	\end{equation*} is symmetric. 
	By Lemma \ref{lemma:sym-and-diag}, we know $\hat{P}^TW^{-1}\hat{P}$ has the same block-diagonal form as ${\rm diag}(\Sigma^2,0)$, denoted as $(n_1,\dots,n_s,n_{s+1})$, where $n_{s+1}$ is the size of the zero matrix. Therefore,
	\begin{equation*}
		\hat{P}^TW^{-1}\hat{P}\begin{bmatrix}\Sigma & \\ & 0\end{bmatrix}
	\end{equation*}
	is symmetric and
	\begin{equation*}
		\hat{P}^TW^{-1}\hat{P}=\begin{bmatrix}
			\tilde{\triangle}_1 &  &  &  \\
			& \tilde{\triangle}_2 &  &  \\
			&  & \ddots &  \\
			&  &  & \tilde{\triangle}_{s+1} 
		\end{bmatrix},
	\end{equation*}
	where the size of $\tilde{\triangle}_i$ is the same $n_i$ ($1\leq i\leq s+1$). Since the Schatten-$q$ norm is a unitarily invariant function, we have
	\begin{align}\label{lemma1need13}
		\| W^{-1}P\Sigma\|_{S_q}^{q}&=\|\hat{P}^TW^{-1}\hat{P}\begin{bmatrix}\Sigma & \\ & 0\end{bmatrix}\|_{S_q}^{q}=\left\|\begin{bmatrix}
			\sigma_1 \tilde{\triangle}_1 &  &  &  \\
			&  \ddots & &  \\
			&  &  \sigma_s \tilde{\triangle}_s & \\
			& & & 0
		\end{bmatrix}\right\|_{S_q}^{q}\nonumber\\
	&=\sum_{i=1}^s\sigma_i\|\tilde{\triangle}_i\|_{S_q}^{q}.
	\end{align}
	Since a unitary transformation does not change the singular values of a positive definite matrix, the singular values of $\tilde{\triangle}_i$ ($1\leq i\leq s$) are $r$ elements of $(W_{11}^{-1},W_{22}^{-1},\dots,W_{nn}^{-1})$. If we set $\Sigma_{ii}=0,\ r<i\leq n$, then \eqref{lemma1need13} can be rewritten as
	\begin{equation*}
		\| W^{-1}P\Sigma\|_{S_q}^{q}=\sum_{i=1}^{n}\left(\frac{\Sigma_{ii}}{W_{\pi(i)\pi(i)}}\right)^q\geq \sum_{i=1}^{r}\left(\frac{\Sigma_{ii}}{W_{ii}}\right)^q,
	\end{equation*}
	where $\pi$ is a permutation operator, and the last inequality comes from the rearrangement inequality.\\
    Similarly, since Logarithmic Norm is a unitarily invariant function, we have 
    \begin{align}\label{lemma1need31}
		\| W^{-1}P\Sigma\|_{L}^{1/2}&=\|\hat{P}^TW^{-1}\hat{P}\begin{bmatrix}\Sigma & \\ & 0\end{bmatrix}\|_{L}^{1/2}=\sum_{i=1}^s\|\sigma_i\tilde{\triangle}_i\|_{L}^{1/2}.
	\end{align}
    Let $f_i(x)=\log(\left(\Sigma_{ii}/x\right)^{1/2}+\epsilon)$ for $x>0$, we know $f'_{i+1}(x)-f'_{i}(x)\geq0$ by simple calculation. Then from the generalization of rearrangement inequality in \cite{holstermann2017generalization} and \eqref{lemma1need31}, we can get
    \begin{equation*}
		\| W^{-1}P\Sigma\|_{L}^{1/2}=\sum_{i=1}^{r}\log\left(\left(\frac{\Sigma_{ii}}{W_{\pi(i)\pi(i)}}\right)^{1/2}+\epsilon\right)\geq \sum_{i=1}^{r}\log\left(\left(\frac{\Sigma_{ii}}{W_{ii}}\right)^{1/2}+\epsilon\right),
	\end{equation*}
    where $\pi$ is a permutation operator.
\end{proof}

\section{Factorization Forms of Weighted Schatten-q Norms}\label{sec:re-3}\label{sec:re-4}

In this section, we establish the specific factorization forms of the weighted Schatten-$q$ norms with $q=1,0.5,2/3$, and the weighted Logarithmic norm using the factorization strategy provided in the previous section.

To simplify notation and provide clear definitions, we present the expressions for the weighted Schatten-$q$ norms and the weighted Logarithmic norm.

\begin{definition}
	Let $X\in\mathbb{R}^{n\times m}$,  $q>0$, and $W={\rm diag}(W_{11},W_{22},\dots,W_{tt})$ with $t=\min\{m,n\}$ be a given descending weighted diagonal matrix. Let $X=U\Sigma V^{T}$ be the SVD of $X$. Then, the corresponding weighted Schatten-$q$ norm of $X$ is defined as
	\begin{equation*}
		\|X\|_{S_q,W}=\left(\sum_{i=1}^{t}\frac{\Sigma_{ii}^q}{W_{ii}^q}\right)^{1/q}.
	\end{equation*}
The corresponding weighted Logarithmic norm is defined as
 \begin{equation*}
		\|X\|_{L,W}^{1/2}=\sum_{i=1}^{t}\log\left(\left(\frac{\Sigma_{ii}}{W_{ii}}\right)^{1/2}+\epsilon\right).
	\end{equation*}
\end{definition}
Here, if $W=I$ and the domain is the real field, the above definitions are the Schatten-q norm and the  Logarithmic norm. 
 
We will  prove that  the factorization forms of the weighted Schatten-q norms for $q=1,0.5,2/3$ and Logarithmic norm have explicite expressions.

\begin{theorem}\label{thm51}
	 Let $W\in\mathbb{R}^{r\times r}$ be a non-decreasing diagonal matrix, i.e.,
	\begin{equation*}
		W_{11}\geq W_{22}\geq\cdots\geq W_{rr}>0.
	\end{equation*}
	For $X\in\mathbb{R}^{m\times n}$ with singular values $\sigma_i\ (1\leq i\leq {\rm rank}(X))$, if $r\geq {\rm rank}(X)$, then we have
	\begin{itemize}
		\item [\rm (1)] \begin{equation*}
			\sum_{i=1}^{{\rm rank}(X)}\frac{\sigma_i}{W_{ii}}=\min_{A\in\mathbb{R}^{m\times r}, B\in \mathbb{R}^{n\times r}, X=AB^{T}}\frac{1}{2}(\|AW^{-1}\|_F^2+\|B\|_F^2),
		\end{equation*}
		\item [\rm (2)] \begin{equation*}
			\sum_{i=1}^{{\rm rank}(X)}\left(\frac{\sigma_i}{W_{ii}}\right)^{\frac{1}{2}}=\min_{A\in\mathbb{R}^{m\times r}, B\in \mathbb{R}^{n\times r}, X=AB^{T}}\frac{1}{2}(\|AW^{-1}\|_{*}+\|B\|_{*}),
		\end{equation*}
		\item [\rm (3)] \begin{equation*}
			\sum_{i=1}^{{\rm rank}(X)}\left(\frac{\sigma_i}{W_{ii}}\right)^{\frac{2}{3}}=\min_{A\in\mathbb{R}^{m\times r}, B\in \mathbb{R}^{n\times r}, X=AB^{T}}\frac{1}{3}(\|AW^{-1}\|_F^2+2\|B\|_{*}),
		\end{equation*}
        \item [\rm (4)] \begin{equation*}
			\|X\|_{L,W}^{1/2}=\min_{A\in\mathbb{R}^{m\times r}, B\in \mathbb{R}^{n\times r}, X=AB^{T}}\frac{1}{2}(\|AW^{-1}\|_{L}^1+\|B\|_{L}^1).
		\end{equation*}
	\end{itemize}
	\end{theorem}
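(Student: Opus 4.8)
The plan is to prove each of the four identities as a pair of matching inequalities: an \emph{achievability} bound (the minimum is at most the claimed value, via an explicit factorization built from the SVD of $X$) and a \emph{lower} bound (every admissible factorization has objective at least the claimed value, via Theorem~\ref{theorem51reweight}). Fix a thin, zero-padded SVD $X=P\Sigma_X Q^{T}$ with $P\in\mathbb{R}^{m\times r}$, $Q\in\mathbb{R}^{n\times r}$ column orthogonal and $\Sigma_X=\mathrm{diag}(\sigma_1,\dots,\sigma_{{\rm rank}(X)},0,\dots,0)\in\mathbb{R}^{r\times r}$. Since the four objectives are coercive in $(A,B)$ on the constraint set $\{X=AB^{T}\}$, the infimum is attained, so it is legitimate to reason about a minimizer.

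For the achievability bound I would exhibit one construction that simultaneously settles cases (1), (2) and (4), namely $A=P\Sigma_X^{1/2}W^{1/2}$ and $B=Q\Sigma_X^{1/2}W^{-1/2}$. A direct check gives $AB^{T}=P\Sigma_X Q^{T}=X$, while $AW^{-1}=P\Sigma_X^{1/2}W^{-1/2}$ and $B=Q\Sigma_X^{1/2}W^{-1/2}$ both have singular values $(\sigma_i/W_{ii})^{1/2}$. Hence $\|AW^{-1}\|_F^2=\|B\|_F^2=\sum_i\sigma_i/W_{ii}$, $\|AW^{-1}\|_*=\|B\|_*=\sum_i(\sigma_i/W_{ii})^{1/2}$, and $\|AW^{-1}\|_L^1=\|B\|_L^1=\sum_i\log((\sigma_i/W_{ii})^{1/2}+\epsilon)$, which plug into (1), (2), (4) to give exactly the three claimed values. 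For case (3) the coefficients of the two terms are unequal, so I would instead take $A=P\Sigma_X^{1/3}W^{2/3}$ and $B=Q\Sigma_X^{2/3}W^{-2/3}$; then $AW^{-1}=P\Sigma_X^{1/3}W^{-1/3}$ has singular values $(\sigma_i/W_{ii})^{1/3}$ and $B$ has singular values $(\sigma_i/W_{ii})^{2/3}$, so $\tfrac13(\|AW^{-1}\|_F^2+2\|B\|_*)=\sum_i(\sigma_i/W_{ii})^{2/3}$. The exponents here are dictated by the weighted AM--GM equality condition for the pair of terms.

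The lower bound is where the work lies. Using the gauge freedom $X=(AG)(BG^{-T})^{T}$, I would reduce an arbitrary factorization to a canonical one, peel off the column-orthogonal factors $P,Q$, and arrive at a problem in an invertible $r\times r$ matrix $G$ in which the weight enters through $\Sigma_X^{1/2}GW^{-1}$. Applying the unweighted factorization identities in the reverse direction---the BM equation~\eqref{eq:BM} for (1), Lemma~\ref{lemma:schatten-12-23} for (2) and (3), and Lemma~\ref{lemma:logdet-BM} for (4)---then bounds the objective below by the unweighted norm of a single structured matrix, for example $\Sigma_X^{1/2}GW^{-1}G^{-1}\Sigma_X^{1/2}$ in case (1). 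A polar decomposition of $G$ is meant to supply a column-orthogonal factor $\tilde P$ and a diagonal $\Sigma$ and recast this matrix into the template $W^{-1}\tilde P\Sigma$ of Theorem~\ref{theorem51reweight}. Since $(2-q)/q=1,3,2\in\mathbb{Z}$ for $q=1,\tfrac12,\tfrac23$ respectively, and the logarithmic cases correspond to $k=1,2$, Theorem~\ref{theorem51reweight} then yields the bound $\sum_i(\sigma_i/W_{ii})^{q}$ (resp. the logarithmic sum), matching the achievability value.

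I expect the reduction feeding Theorem~\ref{theorem51reweight} to be the main obstacle, for two reasons. First, $W$ acts in the $r$-dimensional factorization space and does \emph{not} commute with the gauge maps $A\mapsto AG$, $B\mapsto BG^{-T}$; consequently $A$, $B$ and $W$ cannot be simultaneously diagonalized, and the balanced choice $G=W^{1/2}$ is only a stationary point of a nonconvex problem. Certifying that it is a global minimizer is precisely the content of Theorem~\ref{theorem51reweight}, whose proof controls the minimum over the Stiefel manifold and closes the gap by a rearrangement inequality; the delicate step is to match the matrix produced by the polar reduction with the exact form $W^{-1}\tilde P\Sigma$ so that the theorem can be quoted rather than re-derived. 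Second, when ${\rm rank}(X)<r$ the diagonal $\Sigma_X$ is singular and the canonical factorization no longer exhausts all rank-$r$ factorizations; I would need to argue separately that the surplus factorization directions only increase each objective (in the logarithmic case they contribute boundary terms involving $\epsilon$ that must be reconciled with the left-hand side), so that the minimum is still governed by the nonzero singular values of $X$. Handling these two points carefully is what makes the lower bound, rather than the explicit construction, the crux of the theorem.
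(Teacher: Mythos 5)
Your achievability half is exactly the paper's: it uses the factor pairs $\tilde A=[U\Sigma_X^{1/2}W_X^{1/2},\,0]$, $\tilde B=[V\Sigma_X^{1/2}W_X^{-1/2},\,0]$ for (1), (2), (4) and the $1/3$--$2/3$ split for (3), so that part stands. The genuine gap is in your lower bound, at precisely the step you defer ("recast this matrix into the template $W^{-1}\tilde P\Sigma$ so that the theorem can be quoted"). After your gauge reduction and the unweighted lemma \eqref{eq:BM}, you must bound $\|\Sigma_X^{1/2}GW^{-1}G^{-1}\Sigma_X^{1/2}\|_{*}$ (and its Schatten and logarithmic analogues) below by $\sum_i\sigma_i/W_{ii}$, and a polar decomposition of $G$ does not produce the required template: writing $\Sigma_X^{1/2}G=\tilde U H$ with $\tilde U$ orthogonal and $H$ symmetric positive definite gives $\Sigma_X^{1/2}GW^{-1}G^{-1}\Sigma_X^{1/2}=\tilde U\,(HW^{-1}H^{-1})\,\tilde U^{T}\Sigma_X$, so the nuclear norm equals $\|HW^{-1}H^{-1}\tilde U^{T}\Sigma_X\|_{*}$, where $HW^{-1}H^{-1}$ is only similar --- not orthogonally equivalent --- to $W^{-1}$; its singular values are not $\{1/W_{ii}\}$ and the matrix is not of the form (diagonal)$\times$(column-orthogonal)$\times$(diagonal). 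Theorem \ref{theorem51reweight} therefore cannot be invoked. The structural reason is that your plan calls Theorem \ref{theorem51reweight} only once, at the end, whereas the entanglement between $W^{-1}$ and the gauge factor must be broken \emph{before} the two factors are multiplied. That is how the paper proceeds: for arbitrary $A=U_A\Sigma_AV_A^{T}$, $B=U_B\Sigma_BV_B^{T}$ with $AB^{T}=X$, it first applies Theorem \ref{theorem51reweight} with $\tilde P=V_A$ to get $\|\Sigma_AV_A^{T}W^{-1}\|=\|W^{-1}V_A\Sigma_A\|\geq\|W_A^{-1}\Sigma_A\|$ in the relevant unitarily invariant norm, \emph{then} applies the unweighted factorization result (\eqref{eq:BM}, Lemma \ref{lemma:schatten-12-23}, or Lemma \ref{lemma:logdet-BM}) to the pair $(W_A^{-1}\Sigma_A,\ V_A^{T}V_B\Sigma_B)$, whose product equals $W_A^{-1}U_A^{T}U\Sigma_XV^{T}U_B$, and finally applies Theorem \ref{theorem51reweight} a second time with the column-orthogonal $\tilde P=U_A^{T}U$, which genuinely is in template form.

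The second, lesser gap is the rank-deficient case, which you acknowledge but do not resolve. When ${\rm rank}(X)<r$ the parametrization $A=P\Sigma_X^{1/2}G$, $B=Q\Sigma_X^{1/2}G^{-T}$ misses most admissible factorizations: $A$ may carry extra columns matched to zero columns of $B$, and, worse, contributions can cancel (e.g.\ $a_1b_1^{T}=2X$, $a_2b_2^{T}=-X$), so the deferred claim that "surplus directions only increase each objective" is not a routine monotonicity argument. The paper's route needs no case split at all: it works with the SVDs of arbitrary $A$ and $B$, never inverts $\Sigma_A$ or $\Sigma_B$, and establishes column-orthogonality of $U_A^{T}U$ and $U_B^{T}V$ directly from $AB^{T}=X$, so rank deficiency of the factors is absorbed automatically. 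In short, your outline identifies the right two-sided structure and the right key theorem, but the reduction feeding Theorem \ref{theorem51reweight} --- the actual crux --- would fail as sketched.
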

\begin{proof}
	The singular value decomposition of the matrix $X$ is given by
	\begin{equation*}
		X=U\Sigma_{X}V^{T}.
	\end{equation*}
	First, we prove that the left-hand side of conclusions (1), (2), and (3) respectively are greater than or equal to the right-hand side. Let $W_X$ be the matrix obtained by extracting the first ${\rm rank}(X)$ rows and columns of $W$. Define
	\begin{equation*}
		\tilde{A}=\begin{bmatrix}U\Sigma_{X}^{\frac{1}{2}}W_{X}^{\frac{1}{2}}&0\end{bmatrix}
		,\ \tilde{B}=\begin{bmatrix}V\Sigma_{X}^{\frac{1}{2}}W_{X}^{-\frac{1}{2}}&0\end{bmatrix},
	\end{equation*}
	then for cases (1), (2) and (4), we have
	\begin{itemize}
		\item [(1)] \begin{equation*}
			\sum_{i=1}^{{\rm rank}(X)}\frac{\sigma_i}{W_{ii}}=\frac{1}{2}(\|\tilde{A}W^{-1}\|_F^2+\|\tilde{B}\|_F^2)\geq\min_{X=AB^{T}}\frac{1}{2}(\|AW^{-1}\|_F^2+\|B\|_F^2).
		\end{equation*}
		\item [(2)] \begin{equation*}
			\sum_{i=1}^{{\rm rank}(X)}\left(\frac{\sigma_i}{W_{ii}}\right)^{\frac{1}{2}}=\frac{1}{2}(\|\tilde{A}W^{-1}\|_{*}+\|\tilde{B}\|_{*})\geq\min_{X=AB^{T}}\frac{1}{2}(\|AW^{-1}\|_{*}+\|B\|_{*}).
		\end{equation*}
        \item [(4)] \begin{equation*}
			\|X\|_{L,W}^{1/2}=\frac{1}{2}(\|\tilde{A}W^{-1}\|_{L}^1+\|\tilde{B}\|_{L}^1)\geq\min_{X=AB^{T}}\frac{1}{2}(\|AW^{-1}\|_{L}^1+\|B\|_{L}^1).
		\end{equation*}
	\end{itemize}
	For case (3), we define
	\begin{equation*}
		\hat{A}=\begin{bmatrix}U\Sigma_{X}^{\frac{1}{3}}W_{X}^{\frac{2}{3}}&0\end{bmatrix}
		,\ \tilde{B}=\begin{bmatrix}V\Sigma_{X}^{\frac{2}{3}}W_{X}^{-\frac{2}{3}}&0\end{bmatrix},
	\end{equation*}
	then we have
	\begin{equation*}
		\sum_{i=1}^{{\rm rank}(X)}\left(\frac{\sigma_i}{W_{ii}}\right)^{\frac{2}{3}}=\frac{1}{3}(\|\hat{A}W^{-1}\|_{F}^2+2\|\hat{B}\|_{*})\geq\min_{X=AB^{T}}\frac{1}{3}(\|AW^{-1}\|_{F}^2+2\|B\|_{*}).
	\end{equation*}
	Now, for any $A,B$ such that $AB^{T}=X$, let their singular value decompositions be:
	\begin{equation*}
		A=U_{A}\Sigma_{A}V_{A}^{T},\ B=U_{B}\Sigma_{B}V_{B}^{T}.
	\end{equation*}
	Here, $V_A$ is a square matrix. It can be derived that
	\begin{align}\label{eqn:thm1-need1}
		U_A\Sigma_AV_A^{T}V_B\Sigma_BU_B^{T}=X \Longrightarrow\Sigma_{A}V_{A}^{T}V_{B}\Sigma_{B}=U_{A}^{T}U\Sigma_{X}V^{T}U_{B}.
	\end{align}
	Also,
	\begin{equation*}
		U_{A}^{T}U=U_{A}^{T}U\Sigma_{X}V^{T}V\Sigma_{X}^{-1}=\Sigma_{A}V_{A}^{T}B^{T}V\Sigma_{X}^{-1},
	\end{equation*}
	which implies
	\begin{align*}
		U^{T}U_AU_{A}^{T}U=&\Sigma_X^{-1}V^{T}BV_A\Sigma_A\Sigma_AV_A^{T}B^{T}V\Sigma_{X}^{-1}\\
  =&\Sigma_X^{-1}V^{T}BV_A\Sigma_AU_A^{T}U_A\Sigma_AV_A^{T}B^{T}V\Sigma_{X}^{-1}\\
  =&\Sigma_X^{-1}V^{T}X^{T}XV\Sigma_{X}^{-1}=I
	\end{align*}
 `This means $U_{A}^{T}U$ is columns orthogonal matrix, and also we know $U_B^{T}V$ is columns orthogonal matrix too.\\
	Now, let's prove conclusions (1), (2), and (3):\\
	{\bf Conclusion (1):}\\
	Since the Frobenius norm is a unitarily invariant function, we have
	\begin{align*}
		\frac{1}{2}(\|AW^{-1}\|_F^2+\|B\|_F^2)=&\frac{1}{2}(\|\Sigma_{A}V_{A}^{T}W^{-1}\|_F^2+\|\Sigma_{B}\|_F^2)\\
		\geq &\frac{1}{2}(\|\Sigma_{A}W_{A}^{-1}\|_F^2+\|\Sigma_{B}\|_F^2)\\
		=&\frac{1}{2}(\|W_{A}^{-1}\Sigma_{A}\|_F^2+\|V_{A}^{T}V_{B}\Sigma_{B}\|_F^2)\\
		\geq &\|W_{A}^{-1}\Sigma_{A}V_{A}^{T}V_{B}\Sigma_{B}\|_{*},
	\end{align*}
	where the first inequality comes from case $q=2$ in Theorem \ref{theorem51reweight} and $W_A={\rm diag}(W_{ii},1\leq i\leq{\rm rank}(\Sigma_A))$, the second equality is due to \( V_A^{T} \) being an orthogonal matrix, and the last inequality comes from \eqref{eq:BM}. According to \eqref{eqn:thm1-need1}, we know that
	\begin{equation}\label{eqn:thm1-need2}
		\frac{1}{2}(\|AW^{-1}\|_F^2+\|B\|_F^2)\geq \|W_{A}^{-1}U_{A}^{T}U\Sigma_{X}\|_{*}.
	\end{equation}
	Since \( U_{A}^{T}U \) is a column orthogonal matrix, by case $q=1$ in Theorem \ref{theorem51reweight} and \eqref{eqn:thm1-need2}, we have
	\begin{equation*}
		\frac{1}{2}(\|AW^{-1}\|_F^2+\|B\|_F^2)\geq \|W_{A}^{-1}U_{A}^{T}U\Sigma_{X}\|_{*}\geq \sum_{i=1}^{{\rm rank}(X)}\frac{\sigma_i}{W_{ii}}.
	\end{equation*}
	Combining the fact that the left-hand side of Conclusion (1) is greater than or equal to the right-hand side, this proves Conclusion (1).\\
	{\bf Conclusion (2):}\\
	Since the nuclear norm is a unitarily invariant function, we have
	\begin{align*}
		\frac{1}{2}(\|AW^{-1}\|_*+\|B\|_*)=&\frac{1}{2}(\|\Sigma_{A}V_{A}^{T}W^{-1}\|_*+\|\Sigma_{B}\|_*)\\
		\geq &\frac{1}{2}(\|\Sigma_{A}W_{A}^{-1}\|_*+\|\Sigma_{B}\|_*)\\
		=&\frac{1}{2}(\|W_{A}^{-1}\Sigma_{A}\|_*+\|V_{A}^{T}V_{B}\Sigma_{B}\|_*)\\
		\geq &\|W_{A}^{-1}\Sigma_{A}V_{A}^{T}V_{B}\Sigma_{B}\|_{S_{\frac{1}{2}}}^{\frac{1}{2}},
	\end{align*}
	where the first inequality comes from by case $q=1$ in Theorem \ref{theorem51reweight}, the second equality is due to \( V_A^{T} \) being an orthogonal matrix, and the last inequality comes from factorization form of Schatten-0.5 norm in Lemma \ref{lemma:schatten-12-23}. According to \eqref{eqn:thm1-need1} and \( U_{B}^{T}V \) is a column orthogonal matrix, we know that
	\begin{equation}\label{eqn:thm1-need2-2}
		\frac{1}{2}(\|AW^{-1}\|_*+\|B\|_*)\geq \|W_{A}^{-1}U_{A}^{T}U\Sigma_{X}\|_{S_{\frac{1}{2}}}^{\frac{1}{2}}\geq \sum_{i=1}^{{\rm rank}(X)}\left(\frac{\sigma_i}{W_{ii}}\right)^{\frac{1}{2}},
	\end{equation}
	where the last inequality holds since \( U_{A}^{T}U \) is a column orthogonal matrix and case $q=0.5$ in Theorem \ref{theorem51reweight} holds. Combining the fact that the left-hand side of Conclusion (2) is greater than or equal to the right-hand side, this proves Conclusion (2).\\
	{\bf Conclusion (3):}\\
	Since the nuclear norm and Frobenius norm are unitarily invariant functions, we have
	\begin{align*}
		\frac{1}{3}(\|AW^{-1}\|_F^2+2\|B\|_*)=&\frac{1}{3}(\|\Sigma_{A}V_{A}^{T}W^{-1}\|_F^2+2\|\Sigma_{B}\|_*)\\
		\geq &\frac{1}{3}(\|\Sigma_{A}W_{A}^{-1}\|_F^2+2\|\Sigma_{B}\|_*)\\
		=&\frac{1}{3}(\|W_{A}^{-1}\Sigma_{A}\|_F^2+2\|V_{A}^{T}V_{B}\Sigma_{B}\|_*)\\
		\geq &\|W_{A}^{-1}\Sigma_{A}V_{A}^{T}V_{B}\Sigma_{B}\|_{S_{\frac{2}{3}}}^{\frac{2}{3}},
	\end{align*}
	where the first inequality comes from case $q=2$ in Theorem \ref{theorem51reweight}, the second equality is due to \( V_A^{T} \) being an orthogonal matrix, and the last inequality comes from factorization form of Schatten-2/3 norm in Lemma \ref{lemma:schatten-12-23}. According to \eqref{eqn:thm1-need1} and \( U_{B}^{T}V \) is a column orthogonal matrix, we know that
	\begin{equation}\label{eqn:thm1-need2-3}
		\frac{1}{3}(\|AW^{-1}\|_F^2+2\|B\|_*)\geq \|W_{A}^{-1}U_{A}^{T}U\Sigma_{X}\|_{S_{\frac{2}{3}}}^{\frac{2}{3}}\geq \sum_{i=1}^{{\rm rank}(X)}\left(\frac{\sigma_i}{W_{ii}}\right)^{\frac{2}{3}},
	\end{equation}
	where the last inequality holds since \( U_{A}^{T}U \) is a column orthogonal matrix and case $q=2$ in Theorem \ref{theorem51reweight} holds. Combining the fact that the left-hand side of Conclusion (3) is greater than or equal to the right-hand side, this proves Conclusion (3).\\
    {\bf Conclusion (4):}\\
    Since the Logarithmic norm and Frobenius norm are unitarily invariant functions, we have
	\begin{align*}
		\frac{1}{2}(\|AW^{-1}\|_{L}^1+\|B\|_{L}^1)=&\frac{1}{2}(\|\Sigma_{A}V_{A}^{T}W^{-1}\|_{L}^1+\|\Sigma_{B}\|_{L}^1)\\
		\geq &\frac{1}{2}(\|\Sigma_{A}W_{A}^{-1}\|_{L}^1+\|\Sigma_{B}\|_{L}^1)\\
		=&\frac{1}{2}(\|W_{A}^{-1}\Sigma_{A}\|_{L}^1+\|V_{A}^{T}V_{B}\Sigma_{B}\|_{L}^1)\\
		\geq &\|W_{A}^{-1}\Sigma_{A}V_{A}^{T}V_{B}\Sigma_{B}\|_{L}^{1/2},
	\end{align*}
    where the first inequality comes from Logarithmic norm case in Theorem \ref{theorem51reweight} with $k=1$, the second equality is due to \( V_A^{T} \) being an orthogonal matrix, and the last inequality comes from Lemma \ref{lemma:logdet-BM}. According to \eqref{eqn:thm1-need1} and \( U_{B}^{T}V \) is a column orthogonal matrix, we know that
	\begin{equation*}
		\frac{1}{2}(\|AW^{-1}\|_{L}^1+\|B\|_{L}^1)\geq \|W_{A}^{-1}U_{A}^{T}U\Sigma_{X}\|_{L}^{1/2}\geq\|X\|_{L}^{1/2},
	\end{equation*}
    where the last inequality is from Logarithmic norm case in Theorem \ref{theorem51reweight} with $k=2$.
\end{proof}

\section{Algorithms and convergence result}
We replace the nuclear norm in \eqref{eq:RPCA-model} with the factorization forms of weighted Schatten-$q$ norms or the Logarithmic norm, where the non-convex representation enhances the low-rank recovery capability of the model. The model is stated as follows:
\begin{equation}\label{eq:rpca-decomposition}
    \min\lambda(h_1(AW^{-1})+h_2(B))+\|S\|_1,\ {\rm s.t.}\quad AB^T=X,\ P_{\Omega}(X+S-M)=0.
\end{equation}
Here, $h_1$ and $h_2$ have different definitions under different non-convex representations.
\begin{align*}
    &\text{weighted nuclear norm: }h_1(AW^{-1})=\frac{1}{2}\|AW^{-1}\|_F^2,\ h_2(B)=\frac{1}{2}\|B\|_F^2,\nonumber\\
    &\text{weighted Schatten-0.5 norm: }h_1(AW^{-1})=\frac{1}{2}\|AW^{-1}\|_{*},\ h_2(B)=\frac{1}{2}\|B\|_{*},\nonumber\\
    &\text{weighted Schatten-2/3 norm: }h_1(AW^{-1})=\frac{1}{3}\|AW^{-1}\|_F^2,\ h_2(B)=\frac{2}{3}\|B\|_{*},\nonumber\\
    &\text{weighted Logarithmic norm: }h_1(AW^{-1})=\frac{1}{2}\|AW^{-1}\|_L^1,\ h_2(B)=\frac{1}{2}\|B\|_L^1.
\end{align*}
We use an ADMM framework algorithm to solve \eqref{eq:rpca-decomposition}. In order to ensure explicit solutions for subproblems within this framework, we employ two relaxation variables, and then \eqref{eq:rpca-decomposition} could be rewritten as
\begin{equation}\label{eq:rpca-decomposition-2}
    \min\lambda(h_1(\hat{A})+h_2(\hat{B}))+\|P_{\Omega}(S)\|_1,\ {\rm s.t.}\quad AB^T=X,\ X+S-M=0,\ \hat{A}=AW^{-1},\ \hat{B}=B.
\end{equation}
The corresponding augmented Lagrangian function is:
\begin{align}\label{eq: model-l1-ALS-new}
    L_{\rho}(A,B,S,X,\hat{A},\hat{B};Y_1,Y_2,Y_3,Y_4)=&\lambda(h_1(\hat{A})+h_2(\hat{B}))+\|P_{\Omega}(S)\|_1+\langle Y_1,\hat{A}-AW^{-1}\rangle+\langle Y_2,\hat{B}-B\rangle\nonumber\\
    &+\langle Y_3,AB^T-X\rangle+\langle Y_4,X+S-M\rangle+\frac{\rho}{2}\|\hat{A}-AW^{-1}\|_F^2\nonumber\\
    &+\frac{\rho}{2}\left(\|\hat{B}-B\|_F^2+\|AB^T-X\|_F^2+\|X+S-M\|_F^2\right).
\end{align}
Hence we could establish the following algorithm to solve \eqref{eq:rpca-decomposition-2}:
\begin{algorithm}
    \caption{Matrix recovery based on Weighted factorization form}\label{alg:re-q-12}
    \begin{algorithmic}[1]
        \STATE \textbf{Input:} Parameters $\lambda$, observed matrix and index set $\mathcal{M},\Omega$, initial points $X^0,\hat{A}^0,\hat{B}^0,S^0, A^0, B^0,Y_i^{0},\rho^0,1\leq i\leq 3$.
        \STATE For $k=0,1,\dots$, compute the following iterative steps:
        \STATE $A^{k+1}=\mathop{\arg\min}L_{\rho^k}(X^k,S^k, A, B^k,\hat{A}^k,\hat{B}^k;Y_i^{k},1\leq i\leq 3)$.
        \STATE $B^{k+1}=\mathop{\arg\min}L_{\rho^k}(X^k,S^k, A^{k+1}, B,\hat{A}^k,\hat{B}^k;Y_i^{k},1\leq i\leq 3)$.
        \STATE $S^{k+1}=\mathop{\arg\min}L_{\rho^k}(X^k,S, A^{k+1}, B^{k+1},\hat{A}^k,\hat{B}^k;Y_i^{k},1\leq i\leq 3)$.
        \STATE $X^{k+1}=\mathop{\arg\min}L_{\rho^k}(X,S^{k+1}, A^{k+1}, B^{k+1},\hat{A}^k,\hat{B}^k;Y_i^{k},1\leq i\leq 3)$.
        \STATE $\hat{A}^{k+1}=\mathop{\arg\min}L_{\rho^k}(X^{k+1},S^{k+1}, A^{k+1}, B^{k+1},\hat{A},\hat{B}^k;Y_i^{k},1\leq i\leq 3)$.
        \STATE $\hat{B}^{k+1}=\mathop{\arg\min}L_{\rho^k}(X^{k+1},S^{k+1}, A^{k+1}, B^{k+1},\hat{A}^{k+1},\hat{B};Y_i^{k},1\leq i\leq 3)$.
        \STATE $Y^{k+1}_1=Y^{k}_1+\rho^k(\hat{A}^{k+1}-A^{k+1}W^{-1})$.
        \STATE $Y^{k+1}_2=Y^{k}_2+\rho^k(\hat{B}^{k+1}-B^{k+1})$.
        \STATE $Y^{k+1}_3=Y^{k}_3+\rho^k(A^{k+1}B^{k+1}-X^{k+1})$.
        \STATE $Y^{k+1}_4=Y^{k}_4+\rho^k(X^{k+1}+S^{k+1}-M)$.
        \STATE $\rho^{k+1}=\mu\rho^{k}$.
    \end{algorithmic}
\end{algorithm}
Actually, the updating of every variable in algorithm \ref{alg:re-q-12} is proximal point operator ${\rm Prox}_{th_1}(\cdot)$, which is 
$${\rm Prox}_{th}(X)=\mathop{\arg\min}_{Y\in\mathbb{R}^{n\times m}}\left\{ th(Y)+\frac{1}{2}\|Y-X\|_F^2 \right\}, \ \forall X\in\mathbb{R}^{n\times m}.$$
Specifically, we present the update of each variable as follows:
\begin{align*}
    &A^{k+1}=(\rho^kX^{k}(B^{k})^{T}+\rho\hat{A}^{k}W^{-1}+Y_1^{k}W^{-1}-Y_3^{k}(B^k)^{T})\times(\rho^kB^k(B^k)^{T}+\rho^kW^{-2})^{-1},\nonumber\\
    &B^{k+1}=(\rho^k(A^{k+1})^{T}A^{k+1}+\rho^kI)^{-1}\times(\rho^k(A^{k+1})^{T}X^{k}+\rho\hat{B}^{k}+Y_2^{k}-(A^{k+1})^{T}Y_3^{k}),\nonumber\\
    &S^{k+1}=P_{\Omega}({\rm Prox}_{(\rho^k)^{-1}\|\cdot\|_1}(M-X^{k}-\frac{Y_4^k}{\rho^k}))+P_{\Omega^c}(M-X^{k}-\frac{Y_4^k}{\rho^k}),\nonumber\\
    &X^{k+1}=\frac{1}{2\rho^k}(Y_3^k-Y_4^k+\rho^kA^{k+1}(B^{k+1})^{T}+S^{k+1}-M),\nonumber\\
    &\hat{A}^{k+1}={\rm Prox}_{(\rho^k)^{-1}\lambda h_1}(A^{k+1}W^{-1}-\frac{Y_1^k}{\rho^k}),\nonumber\\
    &\hat{B}^{k+1}={\rm Prox}_{(\rho^k)^{-1}\lambda h_2}(B^{k+1}-\frac{Y_2^k}{\rho^k}).
\end{align*}
Here, the analytical expression of the proximal point operator mentioned above is provided as follows:
\begin{align*}
    &{\rm Prox}_{t\|\cdot\|_1}(Y)=\max(Y-t,0),\nonumber\\
    &{\rm Prox}_{t\|\cdot\|_F^2}(Y)=\frac{Y}{1+t},\nonumber\\
    &{\rm Prox}_{t\|\cdot\|_{*}}(Y)=U{\rm diag}(\max(\sigma(Y)-t,0))V^T,\ Y=U{\rm diag}(\sigma(Y))V^T\text{ is SVD},\nonumber\\
    &{\rm Prox}_{t\|\cdot\|_L^1}(Y)=U\mathbb{L}_{t,\epsilon}({\rm diag}(\sigma(Y)))V^T,\ Y=U{\rm diag}(\sigma(Y))V^T\text{ is SVD},\nonumber\\
\end{align*}
where $\mathbb{L}_{t,\epsilon}(y)$ denotes the logarithmic singular value thresholding (LSVT) operator, which is defined in \cite{kang2016top}, and its entry-wise form is
\begin{equation*}
    \triangle=(y-\epsilon)^2-4(t-y\epsilon),\ \mathbb{L}_{t,\epsilon}(y)=\left\{\begin{aligned}
        &0, &&\triangle\leq 0,\\
        &\mathop{\arg\min}_{x\in\{0,\frac{1}{2}(y-\epsilon+\sqrt{\triangle})\}}\frac{1}{2}(x-y)^2+t\log(x+\epsilon), &&\triangle>0.
    \end{aligned}\right.
\end{equation*}
Before proving the convergence of Algorithm \ref{alg:re-q-12}, we need to demonstrate that the iterative sequences of the variables $\mathcal{A}$, $\mathcal{B}$, $\mathcal{S}$, $\mathcal{X}$, $\hat{\mathcal{A}}$, $\hat{\mathcal{B}}$, $\mathcal{Y}_i$, $1\leq i\leq 4$ are bounded.
\begin{lemma}
    Let $(A^k,B^k,S^k,X^k,\hat{A}^k,\hat{B}^k,Y_i^k,1\leq i\leq 3)$ denote the iterative sequences generated by Algorithm \ref{alg:re-q-12}. If $\{Y^k_3\}$ is bounded, then $\{(A^k,B^k,S^k,X^k,\hat{A}^k,\hat{B}^k,Y_i^k,1\leq i\leq 3)\}$ is bounded.
\end{lemma}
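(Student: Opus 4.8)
The plan is to read off, from the first-order optimality condition of each block update, an identity that ties the corresponding dual variable to a subgradient of the objective, and then run an energy (augmented Lagrangian) argument that exploits the geometric growth $\rho^k=\mu^k\rho^0$. Concretely, I would (i) control the dual sequences, (ii) show the augmented Lagrangian values $L_{\rho^k}$ stay bounded above, and (iii) complete squares to convert that bound into boundedness of the primal iterates.

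For the dual variables, the key observation comes from the $X$-subproblem. Writing its stationarity condition $-Y_3^k+Y_4^k-\rho^k(A^{k+1}(B^{k+1})^T-X^{k+1})+\rho^k(X^{k+1}+S^{k+1}-M)=0$ and substituting the dual updates $Y_3^{k+1}-Y_3^k=\rho^k(A^{k+1}(B^{k+1})^T-X^{k+1})$ and $Y_4^{k+1}-Y_4^k=\rho^k(X^{k+1}+S^{k+1}-M)$ collapses it to $Y_4^{k+1}=Y_3^{k+1}$; hence boundedness of $\{Y_3^k\}$ immediately yields boundedness of $\{Y_4^k\}$, which is why the hypothesis need only name $Y_3$. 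For $\hat A$ and $\hat B$, the stationarity of their subproblems gives $-Y_1^{k+1}\in\lambda\partial h_1(\hat A^{k+1})$ and $-Y_2^{k+1}\in\lambda\partial h_2(\hat B^{k+1})$. When $h_1,h_2\in\{\tfrac12\|\cdot\|_*,\tfrac23\|\cdot\|_*,\tfrac12\|\cdot\|_L^1\}$ these subdifferentials are uniformly norm-bounded (spectral norm at most a constant for the nuclear-norm pieces, entrywise at most $1/\epsilon$ for the logarithmic piece), so $\{Y_1^k\},\{Y_2^k\}$ are bounded outright.

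With all duals bounded, I would bound the energy as follows. Let $L^k:=L_{\rho^k}(\text{vars}^{k+1};Y^{k+1})$. Exact block minimization gives the primal-descent inequality $L_{\rho^k}(\text{vars}^{k+1};Y^k)\le L_{\rho^k}(\text{vars}^k;Y^k)$; the penalty update contributes $\tfrac{\rho^k-\rho^{k-1}}{2(\rho^{k-1})^2}\sum_i\|Y_i^k-Y_i^{k-1}\|^2$, and the dual update contributes $\tfrac{1}{\rho^k}\sum_i\|Y_i^{k+1}-Y_i^k\|^2$, so $L^k\le L^{k-1}+$ (these two correction terms). Because $\rho^k$ grows geometrically, both $\sum_k 1/\rho^k$ and $\sum_k(\rho^k-\rho^{k-1})/(\rho^{k-1})^2$ converge, and the bounded duals keep $\|Y_i^{k+1}-Y_i^k\|$ bounded, so $\{L^k\}$ is bounded above. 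Completing the square on each coupling term, $\langle Y_i^{k+1},r_i\rangle+\tfrac{\rho^k}{2}\|r_i\|^2$ with $r_i=(Y_i^{k+1}-Y_i^k)/\rho^k$, every such term is bounded; hence $\lambda(h_1(\hat A^{k+1})+h_2(\hat B^{k+1}))+\|P_\Omega(S^{k+1})\|_1$ is bounded above. Since each summand is bounded below (by $0$, or by a fixed multiple of $\log\epsilon$ for the logarithmic norm) and each of $h_1,h_2$ is coercive, I obtain $\{\hat A^k\},\{\hat B^k\}$ and $\{\|P_\Omega(S^k)\|_1\}$ bounded; the residual identities then give $\{A^k\},\{B^k\},\{X^k\}$ bounded, and the explicit $S$-update bounds $S^k$ off the support, finishing the argument.

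The main obstacle is the two smooth cases, the weighted nuclear norm ($h_1=\tfrac12\|\cdot\|_F^2$) and weighted Schatten-$2/3$ ($h_1=\tfrac13\|\cdot\|_F^2$). There the relation $-Y_1^{k+1}\in\lambda\partial h_1(\hat A^{k+1})$ reads $Y_1^{k+1}=-\lambda c\,\hat A^{k+1}$, so the dual is proportional to the primal; the shortcut ``bounded subgradient implies bounded dual'' fails, and substituting this into the energy turns the coupling term into $-\lambda\tfrac{c}{2}\|\hat A^{k+1}\|_F^2+\cdots$, which destroys coercivity in $\hat A$. Thus the dual bound and the primal bound become entangled. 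I expect to resolve this by treating $\{Y_1^k\}$ (resp. $\{Y_2^k\}$) and $\{\hat A^k\}$ (resp. $\{\hat B^k\}$) jointly: substituting $Y_1^{k+1}=-\lambda c\,\hat A^{k+1}$ into the residual identity $\hat A^{k+1}=A^{k+1}W^{-1}+(Y_1^{k+1}-Y_1^k)/\rho^k$ yields $\hat A^{k+1}=(1+\lambda c/\rho^k)^{-1}(A^{k+1}W^{-1}-Y_1^k/\rho^k)$, so that the feedback factor $(1+\lambda c/\rho^k)^{-1}\to1$ and the term $Y_1^k/\rho^k\to0$ as $\rho^k\to\infty$; combined with the explicit $A$-update, whose inverse $\tfrac{1}{\rho^k}(B^k(B^k)^T+W^{-2})^{-1}$ has norm at most $W_{11}^2/\rho^k$, this should damp the dual contribution and let me close an induction keeping $\{\hat A^k\},\{Y_1^k\}$ bounded without circularity.
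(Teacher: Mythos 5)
Your overall strategy coincides with the paper's own proof: obtain boundedness of the multipliers from the stationarity conditions of the $\hat A$, $\hat B$ subproblems (the relation $-Y_1^{k+1}\in\lambda\partial h_1(\hat A^{k+1})$ together with a uniform bound on the subdifferential), propagate this through the augmented-Lagrangian recursion $L^{k}\le L^{k-1}+\sum_i\frac{\rho^k+\rho^{k-1}}{2(\rho^{k-1})^2}\|Y_i^k-Y_i^{k-1}\|_F^2$, whose correction terms are summable because $\rho^k$ grows geometrically, and then read off boundedness of $\hat A,\hat B,A,B,X,S$ from the bounded Lagrangian. On two points you are actually tighter than the paper: the identity $Y_4^{k+1}=Y_3^{k+1}$, obtained by substituting both dual updates into the $X$-stationarity condition, cleanly explains why only $\{Y_3^k\}$ must be assumed bounded (the paper's recursion uses all four multipliers but never addresses $Y_4$), and your spectral-norm bound on nuclear-norm subgradients handles the Schatten-$1/2$ case and the $\hat B$ half of the Schatten-$2/3$ case by the same mechanism as the paper's bound $\|Y_1^{k+1}\|_2\le\lambda/\epsilon$ in the logarithmic case.

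The genuine gap is exactly where you flagged it, and your proposed repair does not close it. In the weighted nuclear norm case ($h_1=\frac12\|\cdot\|_F^2$, $h_2=\frac12\|\cdot\|_F^2$) and for the $\hat A$ half of the Schatten-$2/3$ case ($h_1=\frac13\|\cdot\|_F^2$), stationarity gives $Y_1^{k+1}=-\lambda c\,\hat A^{k+1}$, so bounding the dual is equivalent to bounding the primal. Your induction controls $\|\hat A^{k+1}\|$ by $\|A^{k+1}W^{-1}\|+\|Y_1^k\|/\rho^k$, but the explicit $A$-update then injects $\|X^k(B^k)^T\|$ and $\|B^k\|$ into any bound on $\|A^{k+1}\|$, and these quantities are not controlled by the induction hypothesis on $(\hat A^k,Y_1^k)$: bounding $X^k$ and $B^k$ is precisely what the energy argument is for, and the energy argument needs all multipliers bounded first. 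The circularity therefore reappears one step later; the $1/\rho^k$ damping only suppresses terms that were already harmless. Note that the paper does not prove these cases either --- it defers them to Shang et al.\ claiming only the weight $W$ differs --- but the obstruction you found is specific to the algorithm as stated here, which attaches a multiplier $Y_1$ to the smooth Frobenius factor via the splitting $\hat A=AW^{-1}$; an argument that treats the smooth term directly inside the $A$-subproblem (no splitting, hence no such multiplier) would avoid it. As it stands, your proof is complete for the logarithmic and Schatten-$1/2$ instances, matching the paper where the paper gives details, and incomplete for the two instances containing a Frobenius factor.
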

\begin{proof}
    The proof of the cases ``weighted nuclear norm", ``weighted Schatten-0.5 norm", and ``weighted Schatten-2/3 norm" are similar to the \cite{shang2017bilinear}, with the only difference being the inclusion of the weight matrix $W$. Therefore, we omit the corresponding proof. In the case ``weighted Logarithmic norm", from the update of $\hat{A}$ in Algorithm \ref{alg:re-q-12}, we have
    \begin{equation*}
        0\in\partial\lambda\|\hat{A}^{k+1}\|_L^1+\rho^k(\hat{A}^{k+1}-A^{k+1}W^{-1})+Y_1^{k}.
    \end{equation*}
    Note that $\rho^k(\hat{A}^{k+1}-A^{k+1}W^{-1})+Y_1^{k}=Y_1^{k+1}$, then according to Lemma \ref{lemma:diff-of-sigma}, we have
    \begin{equation*}
        -Y_1^{k+1}\in\lambda U\partial(\log(|\cdot|+\epsilon))|_{S}V^T,
    \end{equation*}
    where $USV^T=\hat{A}^{k+1}$ is SVD and 
    \begin{equation*}
        \partial(\log(|x|+\epsilon))=\left\{\begin{aligned}
            &\frac{1}{x+\epsilon}, &&x>0,\\
            &[-\frac{1}{\epsilon},\frac{1}{\epsilon}], &&x=0.
        \end{aligned}\right.
    \end{equation*}
    Therefore, we know
    \begin{equation*}
        \|Y_1^{k+1}\|_2\leq\lambda\|(S+\epsilon I)^{-1}\|_2\leq\frac{\lambda}{\epsilon}.
    \end{equation*}
    This indicates that the variable $Y^{(1)}$ is bounded. Similarly, the boundedness of variable $Y^{(2)}$ can be proved. On the other hand, notice that $\rho^k(\hat{A}^{k}-A^{k}W^{-1})=Y_1^{k}-Y_1^{k-1}$, we can obtain
    \begin{align*}
        &\langle Y_1^{k},\frac{Y_1^{k}-Y_1^{k-1}}{\rho^{k-1}}\rangle+\frac{\rho^k}{2}\|\hat{A}^{k}-A^{k}(\hat{W})^{-1}\|_F^2-\langle Y_1^{k-1},\frac{Y_1^{k}-Y_1^{k-1}}{\rho^{k-1}}\rangle-\frac{\rho^{k-1}}{2}\|\hat{A}^{k}-A^{k}(\hat{W})^{-1}\|_F^2\\
        =&\frac{\rho^k+\rho^{k-1}}{2(\rho^{k-1})^2}\|Y_1^{k}-Y_1^{k-1}\|_F^2.
    \end{align*}
    Similarly, we can prove
    \begin{align*}
        &\langle Y_2^{k},\frac{Y_2^{k}-Y_2^{k-1}}{\rho^{k-1}}\rangle+\frac{\rho^k}{2}\|\hat{B}^{k}-B^{k}\|_F^2-\langle Y_2^{k-1},\frac{Y_2^{k}-Y_2^{k-1}}{\rho^{k-1}}\rangle-\frac{\rho^{k-1}}{2}\|\hat{B}^{k}-B^{k}\|_F^2\\
        =&\frac{\rho^k+\rho^{k-1}}{2(\rho^{k-1})^2}\|Y_2^{k}-Y_2^{k-1}\|_F^2,\\
        &\langle Y_3^{k},\frac{Y_3^{k}-Y_3^{k-1}}{\rho^{k-1}}\rangle+\frac{\rho^k}{2}\|A^k(B^k)^T-X^{k}\|_F^2-\langle Y_3^{k-1},\frac{Y_3^{k}-Y_3^{k-1}}{\rho^{k-1}}\rangle-\frac{\rho^{k-1}}{2}\|A^k(B^k)^T-X^{k}\|_F^2\\
        =&\frac{\rho^k+\rho^{k-1}}{2(\rho^{k-1})^2}\|Y_3^{k}-Y_3^{k-1}\|_F^2.\\
        &\langle Y_4^{k},\frac{Y_4^{k}-Y_4^{k-1}}{\rho^{k-1}}\rangle+\frac{\rho^k}{2}\|X^{k}+S^k-M\|_F^2-\langle Y_4^{k-1},\frac{Y_4^{k}-Y_4^{k-1}}{\rho^{k-1}}\rangle-\frac{\rho^{k-1}}{2}\|X^{k}+S^k-M\|_F^2\\
        =&\frac{\rho^k+\rho^{k-1}}{2(\rho^{k-1})^2}\|Y_4^{k}-Y_4^{k-1}\|_F^2.
    \end{align*}
    Therefore,
    \begin{align*}
        &L_{\rho^{k}}(A^{k+1},B^{k+1},S^{k+1},X^{k+1},\hat{A}^{k+1},\hat{B}^{k+1};Y_i^k,1\leq i\leq 4)\\
        \leq&\mathcal{L}_{\rho^{k}}(A^{k},B^{k},S^{k},X^{k},\hat{A}^{k},\hat{B}^{k};Y_i^k,1\leq i\leq 4)\\
        =&\mathcal{L}_{\rho^{k}}(A^{k},B^{k},S^{k},X^{k},\hat{A}^{k},\hat{B}^{k};Y_i^{k-},1\leq i\leq 4)+\sum_{i=1}^4\frac{\rho^k+\rho^{k-1}}{2(\rho^{k-1})^2}\|Y_i^{k}-Y_i^{k-1}\|_F^2,
    \end{align*}
    noting that $(Y^{k},1\leq i\leq 3)$ is bounded and $(\rho^k+\rho^{k-1})/(2(\rho^{k-1})^2)$ is a geometric series, we know that $ L_{\rho^{k}}(A^{k+1},B^{k+1},S^{k+1},X^{k+1},\hat{A}^{k+1},\hat{B}^{k+1};Y_i^k,1\leq i\leq 4)$ is bounded. By continuously using the definition of $L$, i.e., \eqref{eq: model-l1-ALS-new}, we can successively obtain $\hat{A},\hat{B},A,B,X,S$ are bounded.
\end{proof}
Below we prove the convergence of Algorithm \ref{alg:re-q-12}.

\begin{theorem}
	Let $(A^k,B^k,S^k,X^k,\hat{A}^k,\hat{B}^k,Y_i^k, 1 \leq i \leq 4)$ be the iterative sequence generated by Algorithm \ref{alg:re-q-12}. If $\{Y_3^k\}$ is bounded, then $\{(A^k,B^k,S^k,X^k,\hat{A}^k,\hat{B}^k,Y_i^k, 1 \leq i \leq 4)\}$ is a Cauchy sequence, and its limit is the KKT point of \eqref{eq:rpca-decomposition-2}.
\end{theorem}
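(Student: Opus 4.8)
The plan is to combine the boundedness already furnished by the preceding lemma with a sufficient-decrease estimate for the augmented Lagrangian \eqref{eq: model-l1-ALS-new}, and then to pass to the limit in the optimality conditions of the individual subproblems so as to identify the limit with a KKT point of \eqref{eq:rpca-decomposition-2}. Throughout I write $w^k=(A^k,B^k,S^k,X^k,\hat A^k,\hat B^k)$ for the primal block and use $\rho^{k}=\mu\rho^{k-1}$ with $\mu>1$, so that $\sum_k 1/\rho^k<\infty$. First I would record that the four constraint residuals vanish and are in fact summable: the dual updates give $\hat A^{k+1}-A^{k+1}W^{-1}=(Y_1^{k+1}-Y_1^k)/\rho^k$ and the three analogous identities, so boundedness of $\{Y_i^k\}$ together with $\sum_k 1/\rho^k<\infty$ yields $\sum_k\|\hat A^{k+1}-A^{k+1}W^{-1}\|_F<\infty$ and its companions; in particular every constraint of \eqref{eq:rpca-decomposition-2} holds in the limit. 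I would also note that the $X$-subproblem stationarity forces $Y_3^{k+1}=Y_4^{k+1}$, which is why the single hypothesis that $\{Y_3^k\}$ is bounded suffices to control $\{Y_4^k\}$.

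The central estimate I would establish is $\sum_k\rho^k\|w^{k+1}-w^k\|_F^2<\infty$. For $k$ large each block subproblem in Algorithm \ref{alg:re-q-12} is strongly convex with modulus proportional to $\rho^k$: the $A$-, $B$- and $X$-updates are quadratics whose Hessians are bounded below by $\rho^k\lambda_{\min}(W^{-2})$, $\rho^k$ and $2\rho^k$ respectively; the $S$-update adds $\tfrac{\rho^k}{2}\|\cdot\|_F^2$ to a convex term; and since the logarithmic norm is only $\tfrac{\lambda}{2\epsilon^2}$-weakly convex, the $\hat A$- and $\hat B$-updates become strongly convex of modulus $\rho^k-O(1)$ once $\rho^k>\lambda/\epsilon^2$. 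Block-coordinate minimization then produces a descent $L_{\rho^k}(w^k;Y^k)-L_{\rho^k}(w^{k+1};Y^k)\ge c\,\rho^k\|w^{k+1}-w^k\|_F^2$. Combining this with the quasi-monotonicity identity already computed in the proof of the preceding lemma, which trades $L_{\rho^k}(w^k;Y^k)$ for $L_{\rho^{k-1}}(w^k;Y^{k-1})$ up to the summable error $\sum_i\frac{\rho^k+\rho^{k-1}}{2(\rho^{k-1})^2}\|Y_i^k-Y_i^{k-1}\|_F^2$, and using that $L$ is bounded below on the bounded iterate set, I would telescope to obtain the claimed summability.

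The Cauchy property of the primal block then follows from Cauchy--Schwarz, since $\sum_k\|w^{k+1}-w^k\|_F\le(\sum_k 1/\rho^k)^{1/2}(\sum_k\rho^k\|w^{k+1}-w^k\|_F^2)^{1/2}<\infty$. For the multipliers I would try to transfer this summability through $\|Y_i^{k+1}-Y_i^k\|_F=\rho^k\|r_i^{k+1}\|_F$ using the subproblem inclusions $-Y_1^{k+1}\in\lambda\partial h_1(\hat A^{k+1})$, $-Y_2^{k+1}\in\lambda\partial h_2(\hat B^{k+1})$ and their step-$k$ analogues, yielding that the whole tuple converges to some $(A^*,B^*,S^*,X^*,\hat A^*,\hat B^*,Y_i^*)$. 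Finally I would pass to the limit: the vanishing residuals give primal feasibility $A^*(B^*)^T=X^*$, $X^*+S^*=M$, $\hat A^*=A^*W^{-1}$, $\hat B^*=B^*$, while outer semicontinuity of the Clarke subdifferential applied to the inclusions above, to $0\in\partial\|P_\Omega(\cdot)\|_1(S^{k+1})+Y_4^{k+1}$, and to the smooth stationarity equations in $A,B,X$, reproduces exactly the KKT system of \eqref{eq:rpca-decomposition-2} at the limit.

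The hard part will be the passage from ``residuals and successive differences tend to zero'' to the genuine Cauchy conclusion, and specifically the summability of the multiplier increments. The primal side is handled cleanly by the sufficient-decrease-plus-Cauchy--Schwarz device, but the multiplier side is delicate because $\partial h_1$ and $\partial h_2$ for the logarithmic (and nuclear) norm are set-valued at matrices with vanishing or coincident singular values, so the inclusion $-Y_i^{k+1}\in\lambda\partial h_i(\cdot)$ does not by itself bound $\|Y_i^{k+1}-Y_i^k\|_F$ by the corresponding primal increment. Resolving this should rely on the fact that all iterates stay in a fixed bounded region, on which the relevant singular-value thresholding maps are Lipschitz, together with $\rho^k\to\infty$; a secondary technical point is to verify rigorously that the strong-convexity modulus of the $\hat A$- and $\hat B$-subproblems dominates the weak convexity of the log-norm for all large $k$, which is what legitimizes the descent inequality in the nonconvex case.
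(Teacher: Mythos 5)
Your proposal is correct in outline, but it reaches the Cauchy property of the primal block by a genuinely different mechanism than the paper. The paper's proof shares your first step essentially verbatim (residual summability: $\|\hat A^k - A^kW^{-1}\|_F = \|Y_1^k - Y_1^{k-1}\|_F/\rho^{k-1}$, bounded numerator, geometric $1/\rho^k$), but it never forms a descent inequality for $L_{\rho^k}$: instead it writes out the exact first-order condition of the closed-form quadratic $A$-subproblem, obtaining $\rho^k(A^{k+1}-A^k)\bigl(W^{-2}+B^k(B^k)^T\bigr)=$ (terms built from $Y_1^k$, $Y_3^k$ and their increments, all bounded by the preceding lemma), hence $\|A^{k+1}-A^k\|_F\le \phi/\rho^k$ with $\phi$ uniform; the same for $B$, after which $\hat A,\hat B,X,S$ inherit the Cauchy property from the vanishing residuals. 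This is more elementary and gives the $O(1/\rho^k)$ increment bound directly, but it leans on the explicit solvability of the $A$- and $B$-updates. Your sufficient-decrease-plus-Cauchy--Schwarz route ($\sum_k\rho^k\|w^{k+1}-w^k\|_F^2<\infty$, hence $\sum_k\|w^{k+1}-w^k\|_F<\infty$) is the standard nonconvex-ADMM machinery; it is equally valid once the blockwise strong convexity is verified (your point that $\rho^k$ must eventually dominate the $\lambda/\epsilon^2$ weak-convexity modulus of the logarithmic norm is exactly the check needed), and it is more robust since it does not require closed-form subproblem solutions. Two further remarks. Your observation that the $X$-stationarity forces $Y_3^{k+1}=Y_4^{k+1}$ is correct and in fact repairs a gap in the paper, where boundedness of $\{Y_4^k\}$ is used (in the lemma's telescoping sum and in the residual argument) but never deduced from the hypothesis on $\{Y_3^k\}$. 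Finally, the difficulty you flag about the multiplier increments is real, but it does not put you behind the paper: the paper's proof establishes the Cauchy property only for $(A,B,\hat A,\hat B,X,S)$ and then invokes semicontinuity of the subdifferentials to obtain the KKT system at the limit, without ever proving that the $Y_i^k$ themselves form a Cauchy sequence, even though the theorem asserts it for the full tuple. On that point your proposal is no less complete, and considerably more candid, than the paper's own argument.
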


\begin{proof}
	Note that $\rho^k$ is a geometric series, we have
	\begin{align*}
		\sum_{k=0}^{\infty}\|\hat{A}^k - A^k(W)^{-1}\|_F &= \sum_{k=0}^{\infty}\frac{1}{\rho^{k}}\|Y_1^{k} - Y_1^{k-1}\|_F < \infty,
	\end{align*}
	which implies
	\begin{equation}\label{eq:0304-3}
		\lim_{k\rightarrow\infty}\|\hat{A}^k - A^k(W)^{-1}\|_F = 0,
	\end{equation}
	similarly,
	\begin{equation}\label{eq:0304-4}
		\lim_{k\rightarrow\infty}\|\hat{B}^k - B^k\|_F = 0, \lim_{k\rightarrow\infty}\|A^kB^k - X^k\|_F = 0,\ \lim_{k\rightarrow\infty}\|X^k+S^k-M\|_F = 0.
	\end{equation}
	On the other hand, we have
	\begin{align}\label{eq:0304-1}
		&\rho^{k}(A^{k+1}W^{-1} - \hat{A}^{k} - Y_1^kW^{-1}) \nonumber \\
        =& \rho^{k}(A^{k+1} - A^{k})W^{-2} + \rho^k(A^kW^{-1}-\hat{A}^k) - Y_1^kW^{-1},\nonumber\\
		=& \rho^{k}(A^{k+1} - A^{k})W^{-2} + \frac{\rho^{k}}{\rho^{k-1}}(Y_1^{k+1}W^{-1} - Y_1^k) - Y_1^kW^{-1},
	\end{align}
	and
	\begin{align}\label{eq:0304-2}
		&\rho^k(A^{k+1}B^{k} - X^{k})(B^k)^{T} - Y_3^k(B^k)^{T} \nonumber \\
		=& \rho^k(A^{k+1} - A^{k})B^{k}(B^{k})^{T} + \rho^k(A^{k}B^{k} - X^{k})(B^k)^{T} - Y_3^k(B^k)^{T} \nonumber \\
		=& \rho^k(A^{k+1} - A^{k})B^{k}(B^{k})^{T} + \frac{\rho^k}{\rho^{k-1}}(Y_3^k - Y_3^{k-1})(B^{k})^{T} - Y_3^k(B^k)^{T}.
	\end{align}
	By using the update rule of $A$, we know that the left-hand side of \eqref{eq:0304-1} and \eqref{eq:0304-2} are the gradients of the update function of $A$, which is 0 during the update. Therefore, the left-hand side of \eqref{eq:0304-1} and \eqref{eq:0304-2} sums to 0, yielding
	\begin{align*}
		\sum_{k=1}^{\infty}\|A^{k+1} - A^{k}\|_F \leq \sum_{k=1}^{\infty}\frac{1}{\rho^k}\phi < \infty,
	\end{align*}
	where
	\begin{align*}
		&\phi =\max\{\big(\mu(Y_1^{k+1}W^{-1} - Y_1^k)-Y_1^kW^{-1} + \mu(Y_3^{k} - Y_3^{k-1})(B^{k})^{T}\big)(W^{-2} + B^{k}(B^{k})^{T})^{-1}, k \geq 1\}< \infty.
	\end{align*}
	This proves that $A$ is a Cauchy sequence. Similarly, we can prove that $B$ is a Cauchy sequence. Furthermore, using \eqref{eq:0304-3} and \eqref{eq:0304-4}, we can prove that $\hat{A}$, $\hat{B}$, and $X,S$ are Cauchy sequences. Utilizing the lower semicontinuity property of the subdifferential of the Logarithmic norm, nuclear norm or Frobenius norm, we can show that the limit point $(A^{\infty},B^{\infty},S^{\infty},X^{\infty},\hat{A}^{\infty},\hat{B}^{\infty},Y_i^{\infty}, 1 \leq i \leq 4)$ satisfies
	\begin{align*}
		&0 \in \partial\lambda h_1(\hat{A}^{\infty})+Y_1^{\infty},\ 0 \in \partial\lambda h_2(\hat{B}^{\infty})+Y_2^{\infty},\nonumber\\
		&P_{\Omega^c}(Y_4^{\infty})=0,\ 0\in\partial\|P_{\Omega}(S^{\infty})\|_1+P_{\Omega}(Y_4^{\infty}),\nonumber\\
		&\hat{A}^{\infty} = A^{\infty}(W)^{-1},\ \hat{B}^{\infty} = B^{\infty},\ A^{\infty}(B^{\infty})^T = X^{\infty},\ X^{\infty}+S^{\infty}-M=0.
	\end{align*}
	This proves that the limit point of the iterative sequence is the KKT point of \eqref{eq: model-l1-ALS-new}.
\end{proof}

Theorem \ref{thm51} indicates that when the first $\text{rank}(X)$ elements of the weighted matrix $W$ are close to the corresponding singular values of matrix $X$, the expressions (1), (2), (3) and (4) in Theorem \ref{thm51} are close to $\text{rank}(X)$, representing that the weighted Schatten-q norm and Logarithmic norm are good relaxations of the rank function. Therefore, an important problem in weighted matrix low-rank recovery is how to determine a sufficiently good weight matrix $W$. Fortunately, \cite{shang2017bilinear} and \cite{chen2021logarithmic} have shown that when the weight matrix $W$ is the identity matrix, the corresponding matrix recovery model has good recovery performance. Therefore, when $X$ is the output of an unweighted matrix recovery model, i.e., when the weighted matrix $W^0$ is the identity matrix, the result of the recovery is $X$, which can be considered close to the original matrix to be recovered. In the proposed method for determining the new weight matrix, we consider the singular value decomposition of $X$ as $X=U\Sigma V^T$, and then define the singular values $\Sigma$ as the new weight matrix $W^{1}$, which leads to better matrix recovery. We summarize the above idea into Algorithm \ref{alg:reweighted-summery}.

\begin{algorithm}
\caption{Reweighted quasi norm matrix Recovery}\label{alg:reweighted-summery}
\begin{algorithmic}
\STATE {\textbf{Input.} Observed matrix $M\in\mathbb{R}^{n\times m}$, observation set $\Omega$, parameters $\lambda$, $\epsilon>0$, and rank $r$.}
\STATE {\textbf{Step 0.} Initialize $W^0=I_{r}$. Iteratively perform the following steps for $k=0,1,\dots$:}
\STATE {\textbf{Step 1.} Call the algorithm \ref{alg:re-q-12} for matrix recovery with weight matrix $W^k$, obtaining the output $X^k$.}
\STATE {\textbf{Step 2.} Perform singular value decomposition of $X^k$ to obtain the singular value matrix $\Sigma^k$. Update
\begin{equation*}
W^{k+1}=\max\left\{\Sigma^{k},\epsilon\right\},
\end{equation*}
and update $k\rightarrow k+1$.}
\STATE {\textbf{Output.} $X^{k}$.}
\end{algorithmic}
\end{algorithm}

The numerical experiments will be presented in future work.

\clearpage

\bibliographystyle{unsrt}   
\bibliography{ref} 

\clearpage

\end{document}